\newcommand{\RR}{\mathbb{R}}
\newcommand{\ZZ}{\mathbb{Z}}
\newcommand{\vC}{\mathcal{C}}
\newcommand{\vP}{\mathcal{P}}
\newcommand{\Aut}{\operatorname{Aut}}
\newcommand{\Homeo}{\operatorname{Homeo}}
\newcommand{\Mod}{\operatorname{Mod}}
\newcommand{\lra}{\longrightarrow}
\newcommand{\ol}[1]{\overline{#1}}
\newcommand{\wt}[1]{\widetilde{#1}}
\definecolor{lightgrey}{gray}{.85}
\theoremstyle{definition}
\newtheorem*{rmk}{Remark}
\theoremstyle{plain}
\newtheorem{thm}{Theorem}
\newtheorem{lem}[thm]{Lemma}
\newtheorem{cor}[thm]{Corollary}
\newtheorem{prop}[thm]{Proposition}
\newtheorem{conj}[thm]{Conjecture}
\title{Cofinal elements and fractional Dehn twist coefficients}
\author[Adam Clay]{Adam Clay}
\thanks{Adam Clay was partially supported by NSERC grant RGPIN-05343-2020.}
\address{Department of Mathematics\\
University of Manitoba \\
Winnipeg \\
MB Canada R3T 2N2} \email{Adam.Clay@umanitoba.ca}
\urladdr{http://server.math.umanitoba.ca/~claya/} 
\author[Tyrone Ghaswala]{Tyrone Ghaswala}
\thanks{Tyrone Ghaswala was supported by a CIRGET postdoctoral fellowship at L'Universit\'{e} du Qu\'{e}bec \`{a} Montr\'{e}al}
\address{Centre for Education in Mathematics and Computing\\
University of Waterloo \\
Waterloo \\
ON Canada N2L 3G1} \email{tghaswala@uwaterloo.ca}
\urladdr{https://tyjgh.github.io/} 
\date{\today}
\begin{document}

\maketitle

%%%%%%%%%%%%%%%%%%%%%%%%%%%%%%%%%%%
\begin{abstract}
We show that for a surface $S$ with positive genus and one boundary component, the mapping class of a Dehn twist along a curve parallel to the boundary is cofinal in every left ordering of the mapping class group $\Mod(S)$.  We apply this result to show that one of the usual definitions of the fractional Dehn twist coefficient---via translation numbers of a particular action of $\Mod(S)$ on $\mathbb{R}$---is in fact independent of the underlying action when $S$ has genus larger than one.  As an algebraic counterpart to this, we provide a formula that recovers the fractional Dehn twist coefficient of a homeomorphism of $S$ from an arbitrary left ordering of $\Mod(S)$.
\end{abstract}

\maketitle

\section{Introduction}
 
Braid groups, and more generally mapping class groups of hyperbolic surfaces with nonempty boundary, are left-orderable.  There are many techniques for producing explicit examples of such orderings, ranging from combinatorial conditions on representative words relative to a certain generating set, to conditions on arc diagrams in the surface, to hyperbolic geometry (see e.g. \cite{DDRW08, RW00}).  In fact there are uncountably many ways to left order any left-orderable mapping class group, aside from $B_2 \cong \mathbb{Z}$ which only admits two left orderings. Despite this flexibility in creating left orderings of mapping class groups, the left orderings all display a type of algebraic rigidity that is also reflected in the dynamics of their actions on $\mathbb{R}$.  

Given a left-ordered group $(G,<)$, an element $g \in G$ is called \emph{cofinal} relative to the ordering $<$ of $G$ if 
\[G = \{ h \in G \mid \exists k \in \mathbb{Z} \mbox{ such that } g^{-k} < h < g^k \}.\]
In terms of dynamics, this means that for every action of $G$ on $\mathbb{R}$ by orientation-preserving homeomorphisms and without global fixed points, the element $g$ will act without fixed points.  

Using $\Mod(\Sigma_g^1)$ to denote the mapping class group of a surface of genus $g$ having one boundary component and $T_d$ to denote the Dehn twist about a curve isotopic to the boundary of $\Sigma_g^1$, we prove:

\begin{thm}
\label{thm:cofinal-boundary-twist}
For all $g>0$, the element $T_d \in \Mod(\Sigma_g^1)$ is cofinal in every left ordering of $\Mod(\Sigma_g^1)$.
\end{thm}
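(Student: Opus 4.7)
The plan is to argue by contradiction using the dynamical realization of $(G,<)$ together with torsion in the quotient $\bar{G} := \Mod(\Sigma_g^1)/\langle T_d \rangle \cong \Mod(\Sigma_g^{(1)})$, the mapping class group of the once-punctured genus-$g$ surface.

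Suppose $T_d$ is not cofinal in some left ordering $<$ of $G := \Mod(\Sigma_g^1)$. Pass to the dynamical realization of $(G,<)$, a faithful orientation-preserving action of $G$ on $\RR$ with basepoint $x_0$ whose orbit is order-isomorphic to $(G,<)$. In this setup, failure of cofinality of $T_d$ is equivalent to $T_d$ having a fixed point in $\RR$. Since $T_d$ is central, the fixed-point set $F \subset \RR$ of $T_d$ is $G$-invariant and closed. Replacing $T_d$ by its inverse if necessary, assume $T_d > 1$ and set $y_\pm := \lim_{k \to \pm \infty} T_d^k \cdot x_0$; at least one of $y_\pm$ is finite and lies in $F$.

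The central claim is that $G$ acts trivially on $F$. The induced $G$-action on $F$ factors through $\bar{G}$. For any torsion element $\bar{t} \in \bar{G}$ of order $n$, lift to $t \in G$; then $t^n = T_d^k$ for some $k \in \ZZ$, and since $G$ is torsion-free, $\bar{t} \neq 1$ forces $k \neq 0$. Thus $t^n$ fixes $F$ pointwise, and $t$ acts on $F$ as a finite-order orientation-preserving homeomorphism; any such map must be the identity, since iterating a strictly monotonic non-identity self-map of a subset of $\RR$ never returns to the starting point. Hence every torsion element of $\bar{G}$ acts trivially on $F$. Invoking the classical fact that $\Mod(\Sigma_g^{(1)})$ is normally generated by torsion for every $g \geq 1$---for $g=1$ one has $\SL_2(\ZZ)$, generated by its order-$4$ and order-$6$ elements, and for $g \geq 2$ one uses generation by involutions due to Korkmaz, Brendle--Farb, Kassabov---the entire $\bar{G}$-action, and hence the $G$-action, on $F$ is trivial.

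The contradiction now falls out. Every point of $F$ is a global fixed point for the $G$-action on $\RR$. Assume $y_+ < \infty$ (the case $y_- > -\infty$ is symmetric). Then $G$ fixes $y_+$ and preserves $(-\infty, y_+)$, so $g \cdot x_0 < y_+$ for every $g \in G$; by definition of $y_+$ as a supremum, this gives $g \cdot x_0 < T_d^k \cdot x_0$, i.e., $g < T_d^k$, for some $k$. A mirror argument using $y_-$ (whether $y_-$ is finite and fixed by $G$, or $y_- = -\infty$) gives $T_d^{-k'} < g$ for some $k'$. Every element of $G$ is therefore bounded between powers of $T_d$, contradicting the failure of cofinality. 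The main obstacle, and the only nontrivial algebraic input, is the normal generation of $\Mod(\Sigma_g^{(1)})$ by torsion; everything else is a clean dynamical argument, and this normal-generation statement requires distinct inputs for the base case $g=1$ and the higher-genus case $g \geq 2$.
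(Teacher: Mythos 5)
Your dynamical skeleton is correct, and it is a genuinely different route from the paper's: non-cofinality of $T_d$ in some left ordering produces, in a dynamical realisation, a nonempty closed fixed set $F$ of $T_d$ that is $G$-invariant because $T_d$ is central; the action of $G = \Mod(\Sigma_g^1)$ on $F$ factors through the capping quotient $\Mod(\Sigma_{g,1})$; each torsion element of that quotient restricts to a finite-order increasing bijection of $F \subset \RR$, hence acts trivially on $F$; and global triviality on $F$ contradicts non-cofinality exactly as you argue. The gap is in the algebraic input you feed into this machine. For $g = 2$ the claim that $\Mod(\Sigma_{2,1})$ is generated by involutions is false: $H_1(\Mod(\Sigma_{2,1});\ZZ) \cong \ZZ/10$ (see \cite{Korkmaz02}), while any group that is even normally generated by involutions has abelianisation an elementary abelian $2$-group, since the abelianisation is then generated by the images of those involutions. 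No appeal to Brendle--Farb or Kassabov can repair this: their theorems require genus at least $3$ (and you would additionally need to check that the versions you invoke apply to once-punctured surfaces, where every finite-order mapping class must fix the puncture). So, as written, your proof fails at $g = 2$, which is precisely the genus where the paper's Theorems \ref{conj to shift intro version} and \ref{fdtc from orders} begin to apply.

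The statement you actually need---that torsion elements normally generate $\Mod(\Sigma_{g,1})$ for every $g \geq 1$---is true, and the economical proof of it is the one implicit in the paper: chain relations. For a $2g$-chain $(a_1,\ldots,a_{2g})$ in $\Sigma_g^1$, the elements $X = T_{a_1}T_{a_2}\cdots T_{a_{2g}}$ and $Y = T_{a_1}^2T_{a_2}\cdots T_{a_{2g}}$ satisfy $X^{4g+2} = Y^{4g} = T_d$, so their images $\hat X, \hat Y$ under the capping homomorphism are torsion in $\Mod(\Sigma_{g,1})$, and $\hat Y \hat X^{-1}$ is the Dehn twist about the image of $a_1$, a nonseparating curve. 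The subgroup generated by all torsion elements is automatically normal, contains this twist, hence contains every twist about a nonseparating curve (they are all conjugate), and these generate $\Mod(\Sigma_{g,1})$. This handles every $g \geq 1$ uniformly (your separate $\SL_2(\ZZ)$ argument for $g=1$ becomes unnecessary) and completes your proof. But note what the repair reveals: the paper reaches Theorem \ref{thm:cofinal-boundary-twist} with no dynamics at all. Its Lemma \ref{lem:generated-by-roots} shows that in any left ordering the elements trapped between powers of a central element $z$ form a subgroup, so $z$ is cofinal as soon as the group is generated by elements each having a power equal to a power of $z$; conjugates of $X$ and $Y$ are such elements because $T_d$ is central. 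Your approach is salvageable and isolates exactly what is needed of the quotient (normal generation by torsion), but it spends more machinery---the dynamical realisation plus literature on torsion generation---to buy the same theorem, and the corrected torsion input is essentially the paper's argument transported into the quotient.
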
 

For the case of $g=0$, the braid groups, the above result also holds and is well-known \cite[Proposition 3.6]{DDRW08}.  Aside from the dynamical consequences explored in this paper, it is also worth noting that Theorem \ref{thm:cofinal-boundary-twist} implies that every positive cone of $\Mod(\Sigma_g^1)$ is a coarsely connected subset of the Cayley graph of $\Mod(\Sigma_g^1)$ \cite[Lemma 4.14]{Hucho}.  In the language of \cite{Hucho}, this means that $\Mod(\Sigma_g^1)$ for $g>0$ is an example of a \emph{Prieto group}.

There is a classical correspondence between elements of $H^2(G;\ZZ)$ and equivalence classes of central extensions of $G$ by $\ZZ$ (see \cite[Chapter 4]{brown}).  In the setting of ordered groups, this plays out as a correspondence between circularly-ordered groups (or groups admitting an action on $S^1$ by orientation-preserving homeomorphisms) and left-ordered central extensions with cofinal central elements (or central extensions admitting an action on $\RR$ by orientation-preserving homeomorphisms, for which the central element acts as translation by one).

We can apply this to mapping class groups via the well-known central extension given by the ``capping homomorphism," using Theorem \ref{thm:cofinal-boundary-twist} to establish a correspondence between left orderings of $\Mod(\Sigma_g^1)$ and circular orderings of $\Mod(\Sigma_{g,1})$, the mapping class group of a surface with one marked point.  By combining this correspondence with a rigidity result of Mann and Wolff \cite{MW}, we are able to conclude that all left orderings of $\Mod(\Sigma_g^1)$ have certain dynamical properties in common, and apply this to fractional Dehn twist coefficients as follows.

We denote the fractional Dehn twist coefficient of $h \in \Mod(\Sigma_g^1)$ by $c(h)$, and first remark that although this quantity is often defined in terms of singular foliations, it can equivalently be defined as the translation number of $h$ under a particular action of $\Mod(\Sigma_g^1)$ on $\mathbb{R}$ (E.g. see \cite{IK17} or \cite{Malyutin04}).  By an application of Theorem \ref{thm:cofinal-boundary-twist} and the result of Mann and Wolff, we show that this definition is independent of the choice of action.

\begin{thm}
\label{conj to shift intro version} 
Suppose that $g \geq 2$ and let $\rho:\Mod(\Sigma_g^1) \rightarrow \mathrm{Homeo}_+(\mathbb{R})$ be an injective homomorphism such that the action of $\Mod(\Sigma_g^1)$ on $\mathbb{R}$ is without global fixed points. Then, up to reversing orientation, $\rho$ is conjugate to a representation $\rho':\Mod(\Sigma_g^1) \rightarrow \mathrm{H}\widetilde{\mathrm{ome}}\mathrm{o}_+(S^1)$ such that $\rho'(T_d)(x) = x+1$ for all $x \in \mathbb{R}$, and for every $h \in \Mod(\Sigma_g^1)$ the fractional Dehn twist coefficient $c(h)$ is given by the translation number of $h$ as computed from the $\rho'$-action on $\mathbb{R}$.
\end{thm}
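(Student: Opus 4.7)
The plan is to use Theorem \ref{thm:cofinal-boundary-twist} to show that $\rho(T_d)$ acts on $\RR$ without fixed points, normalize it to translation by one, descend the resulting representation to an action on $S^1$ via the capping homomorphism, and then invoke the rigidity theorem of Mann and Wolff to identify this circle action, up to semi-conjugacy, with the standard Nielsen action whose canonical lift realizes the fractional Dehn twist coefficient as a translation number.

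The first step is to extract a left ordering on $\Mod(\Sigma_g^1)$ from $\rho$. Since $\Mod(\Sigma_g^1)$ is countable and $\rho$ is faithful, a generic $x_0 \in \RR$ has a free $\rho$-orbit, so setting $g < h$ whenever $\rho(g)(x_0) < \rho(h)(x_0)$ defines a left ordering of $\Mod(\Sigma_g^1)$. Theorem \ref{thm:cofinal-boundary-twist} then forces $T_d$ to be cofinal, so the orbit $\{\rho(T_d)^n(x_0)\}_{n \in \ZZ}$ is unbounded in both directions and $\rho(T_d)$ is fixed-point free on $\RR$. After conjugating $\rho$ by an orientation-reversing homeomorphism of $\RR$ if needed, we may assume $\rho(T_d)(x) > x$ for all $x$; a standard conjugacy in $\Homeo_+(\RR)$ then lets us arrange $\rho(T_d)(x) = x+1$, producing the desired $\rho'$ on the real line.

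Because $T_d$ is central in $\Mod(\Sigma_g^1)$, every $\rho'(h)$ commutes with the translation $\rho'(T_d)$, so $\rho'$ factors through $\widetilde{\Homeo}_+(S^1) \subset \Homeo_+(\RR)$. Reducing modulo $1$ and using that the kernel of the capping homomorphism $\Mod(\Sigma_g^1) \to \Mod(\Sigma_{g,1})$ is exactly $\langle T_d \rangle$, we obtain a well-defined action $\overline{\rho'} : \Mod(\Sigma_{g,1}) \to \Homeo_+(S^1)$, non-trivial by faithfulness of $\rho$. The rigidity theorem of Mann and Wolff \cite{MW} then asserts that $\overline{\rho'}$ is semi-conjugate, up to reversing orientation of $S^1$, to the standard Nielsen action of $\Mod(\Sigma_{g,1})$ on $\partial_\infty \widetilde{\Sigma_{g,1}} \cong S^1$. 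Lifting the semi-conjugating map to $\RR$ produces a monotone degree-one map intertwining $\rho'$ with a lift of the Nielsen action; this lift must send $T_d$ to translation by $1$ and thus coincides with the canonical lift used to define $c(h)$, since lifts of a fixed $S^1$-action differ by elements of $\Hom(\Mod(\Sigma_g^1),\ZZ)$, and this group is trivial because the abelianization of $\Mod(\Sigma_g^1)$ is finite for $g \geq 2$. Translation numbers being invariant under semi-conjugacy, we conclude that $\widetilde{\rot}(\rho'(h))$ equals the translation number of $h$ in the canonical Nielsen lift, which by the definition adopted in \cite{IK17, Malyutin04} is exactly $c(h)$.

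The main obstacle will be locating the precise statement of the Mann--Wolff theorem in the form needed here, namely that every non-trivial orientation-preserving action of $\Mod(\Sigma_{g,1})$ on $S^1$ is semi-conjugate, up to reversal of orientation, to the Nielsen action, and carefully tracking the orientation-reversal dichotomy so it aligns with the ``up to reversing orientation" clause in the theorem statement. A secondary care point is verifying that normalizing $\rho(T_d)$ to $x \mapsto x+1$ singles out the correct lift of the Nielsen action on the real line; this is where the vanishing of $\Hom(\Mod(\Sigma_g^1),\ZZ)$ for $g \geq 2$ does the essential work in matching the two central extensions.
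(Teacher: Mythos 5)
Your proposal follows the paper's proof for the first half and then genuinely diverges at the endgame, in a way that works. Both you and the paper use Theorem \ref{thm:cofinal-boundary-twist} to force $\rho(T_d)$ to act freely, normalize so that $\rho'(T_d)$ is translation by one, descend through the capping homomorphism to a nontrivial circle action of $\Mod(\Sigma_{g,1})$, and invoke Mann--Wolff. But where the paper (Theorem \ref{main theorem}) converts the semiconjugacy into an equality of bounded Euler classes (Proposition \ref{semiconj of orders}), manufactures from it an automorphism $\phi$ of $\Mod(\Sigma_g^1)$ fixing $T_d$ with matching translation numbers (Lemma \ref{translation same up to isom}), and then needs Ivanov's theorem on $\Aut(\Mod(\Sigma_{g,1}))$ (Lemma \ref{lem:modaut-inner}) plus conjugation-invariance (Proposition \ref{conjugacy invariance}) to kill $\phi$, you instead lift the semiconjugating monotone map and compare the resulting homomorphic lift $F$ of the Nielsen action directly with the standard lift $\rho_s$: their ``difference'' $h \mapsto F(h)\rho_s(h)^{-1}$ is a homomorphism into the central $\ZZ$ (this is exactly the paper's Lemma \ref{lem:extension-automorphism}), hence trivial since $\Hom(\Mod(\Sigma_g^1),\ZZ)=0$. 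This is more direct and avoids Ivanov's theorem entirely. One point you flag but should make explicit: once you normalize $\rho'(T_d)(x)=x+1$, the ``up to reversing orientation'' freedom in the theorem is spent, so you must \emph{rule out} that $\overline{\rho'}$ is semiconjugate to the orientation-reversed Nielsen action rather than merely ``track'' the dichotomy. Your own mechanism does this: a homomorphic lift of the reversed Nielsen action sending $T_d$ to translation by $+1$ would differ from the reversed standard lift (which sends $T_d$ to translation by $-1$) by a homomorphism $\Mod(\Sigma_g^1)\to\ZZ$ taking the value $2$ on $T_d$, contradicting $\Hom(\Mod(\Sigma_g^1),\ZZ)=0$.

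There is one genuine flaw in your first step: the claim that a generic $x_0 \in \RR$ has a free $\rho$-orbit is false for faithful actions of countable groups in general. The set of points with nonfree orbit is $\bigcup_{g \neq id}\mathrm{Fix}(\rho(g))$, and these closed sets can have interior and cover all of $\RR$ (Thompson's group $F$ acting on an interval is the standard example: every point is fixed by some nontrivial element). The repair is standard and is exactly the paper's Proposition \ref{rep to order}: fix an auxiliary left ordering $\prec$ of $\Mod(\Sigma_g^1)$ (which exists, as mapping class groups of surfaces with boundary are left-orderable) and declare $g < h$ if $\rho(g)(0) < \rho(h)(0)$, or $\rho(g)(0) = \rho(h)(0)$ and $g \prec h$. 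Cofinality of $T_d$ in this ordering, together with the absence of global fixed points (which makes the full orbit of $0$ unbounded in both directions), still yields that the $\langle T_d \rangle$-orbit of $0$ is unbounded in both directions, so $\rho(T_d)$ is fixed-point free. Alternatively one can argue as the paper does in Theorem \ref{conj to shift}: a fixed point of $\rho(T_d)$ would place $T_d$ inside a proper convex subgroup (the stabilizer) in a suitable left ordering, contradicting Theorem \ref{thm:cofinal-boundary-twist}. With that repair, and the orientation point made explicit as above, your argument is complete and correct.
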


Morally, Theorem \ref{conj to shift intro version} tells us that the translation number of an element of $\Mod(\Sigma_g^1)$ is intrinsic to the element itself, and does not depend on any particular choice of action.  We also see that since the fractional Dehn twist coefficient is a rational number \cite[Section 3]{HKM}, it follows that for every action of $\Mod(\Sigma_g^1)$ on $\RR$ with $T_d$ acting as translation by one, all translation numbers are rational. 

One weakness of Theorem \ref{conj to shift intro version}, however, is that in order to compute the fractional Dehn twist coefficient from a representation $\rho:\Mod(\Sigma_g^1) \rightarrow \mathrm{Homeo}_+(\mathbb{R})$, one must first normalise so that the Dehn twist $T_d$ is sent to translation by one. This normalisation issue disappears when we re-cast the previous theorem in terms of left orderings.  

Given a left ordering $<$ of $\Mod(\Sigma_g^1)$ for which $T_d > id$ and $h \in \Mod(\Sigma_g^1)$, we use $[h]_<$ to denote the unique power of $T_d \in \Mod(\Sigma_g^1)$ such that $T_d^{[h]_<} \leq h < T_d^{[h]_<+1}$.  Such a power exists by Theorem \ref{thm:cofinal-boundary-twist}.

\begin{thm}
\label{fdtc from orders}
Suppose that $g\geq 2$ and $h \in \Mod(\Sigma_g^1)$. Denote the fractional Dehn twist coefficient of $h$ by $c(h)$.  For every left ordering $<$ of $\Mod(\Sigma_g^1)$ for which $T_d > id$, we have
\[ c(h) = \lim_{n \to \infty} \frac{[h^n]_<}{n}.
\] 
\end{thm}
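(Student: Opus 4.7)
The strategy is to reduce the statement to Theorem \ref{conj to shift intro version} by realising the left order dynamically. Given $(\Mod(\Sigma_g^1),<)$ with $T_d > id$, I form a dynamical realisation $\rho:\Mod(\Sigma_g^1) \to \mathrm{Homeo}_+(\RR)$ together with a basepoint $x_0 \in \RR$ for which $g < h$ if and only if $\rho(g)(x_0) < \rho(h)(x_0)$. This is the standard construction for countable left-orderable groups and is automatically faithful. Because $T_d$ is cofinal by Theorem \ref{thm:cofinal-boundary-twist}, the orbit $\{\rho(T_d^k)(x_0) : k \in \ZZ\}$ is unbounded above and below in $\RR$, so the action has no global fixed points.

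With these hypotheses in place, Theorem \ref{conj to shift intro version} supplies a homeomorphism $\phi$ of $\RR$ such that the conjugate $\rho'(f) := \phi \rho(f) \phi^{-1}$ sends $T_d$ to translation by $1$ and satisfies $c(f) = \tau(\rho'(f))$ for every $f \in \Mod(\Sigma_g^1)$. The assumption $T_d > id$ is what ensures no orientation reversal is required: since $\rho(T_d)(x_0) > x_0$, an orientation-reversing $\phi$ would force $\rho'(T_d)$ to be translation by $-1$, contradicting the conclusion of Theorem \ref{conj to shift intro version}. Hence $\phi$ is orientation-preserving, and setting $y_0 = \phi(x_0)$, the evaluation map $f \mapsto \rho'(f)(y_0)$ remains strictly order-preserving.

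Since $\rho'(T_d)$ is the unit shift, the defining inequality $T_d^{[f]_<} \leq f < T_d^{[f]_<+1}$ transports to $y_0 + [f]_< \leq \rho'(f)(y_0) < y_0 + [f]_< + 1$, giving $[f]_< = \lfloor \rho'(f)(y_0) - y_0 \rfloor$. Applied to $f = h^n$ and divided by $n$, the floor modifies the numerator by at most $1$, so
\[
\lim_{n\to\infty}\frac{[h^n]_<}{n} \;=\; \lim_{n\to\infty}\frac{\rho'(h)^n(y_0) - y_0}{n} \;=\; \tau(\rho'(h)) \;=\; c(h),
\]
which is the desired formula.

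The substantive work is already absorbed into Theorem \ref{conj to shift intro version}; the anticipated main obstacle is therefore just the careful verification that the dynamical realisation is a valid input to that theorem (faithfulness and absence of global fixed points) and that the sign hypothesis $T_d > id$ precisely selects the orientation-preserving branch of its conclusion. Everything else is a direct dictionary between the algebraic quantity $[\cdot]_<$ and the translation number of a homeomorphism commuting with a unit shift.
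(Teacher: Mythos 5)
Your proposal is correct and follows essentially the same route as the paper: the paper proves this theorem by feeding the dynamic realisation of $<$ into Theorem \ref{conj to shift} and then citing Proposition \ref{order to rep}, whose content (the identification $[f]_< = \lfloor \rho'(f)(y_0)-y_0\rfloor$ and the passage to translation numbers) you simply re-derive inline, along with an explicit check of the orientation issue that the paper leaves implicit.
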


This theorem allows one to easily find bounds on fractional Dehn twist coefficients using left-orderings (Proposition \ref{estimation}), and in some cases allows one to compute the precise value of the fractional Dehn twist coefficient of a particular homeomorphism from only two inequalities (Corollary \ref{estimation cor}).

\subsection{Outline of the paper}
In Section \ref{orders and dynamics} we review left orderings of groups, their relationship to circular orderings, as well as translation number, rotation number and semiconjugacy.  In Section \ref{sec:mcg-basics} we introduce mapping class groups, and prepare several lemmas. Theorem \ref{thm:cofinal-boundary-twist} is proved in Section \ref{sec:cofinal}. In Section \ref{FDTC and orders} we define the fractional Dehn twist coefficient, prove Theorems \ref{conj to shift intro version} and Theorem \ref{fdtc from orders}, and provide some basic tools for estimating fractional Dehn twist coefficients from these results.  In Section \ref{limitations} we provide examples that show the limitations of these theorems.

\subsection{Acknowledgements} We would like to thank Alan McLeay for his helpful insights into Ivanov's theorem and the automorphism group of the mapping class group. We also thank the referees for their careful reading and suggestions for improvement.

\section{Orders, dynamics and semiconjugacy}
\label{orders and dynamics}
\subsection{Left-orderable and circularly-orderable groups}
\label{lifts and quotients}

A \emph{left ordering} of a group $G$ is a strict total ordering $<$ of the elements of $G$ such that $g<h$ implies $fg<fh$ for all $f, g, h \in G$.  A \emph{positive cone} $P \subset G$ is a semigroup satisfying $G\setminus\{ id \} = P \cup P^{-1}$.  We can pass from an ordering $<$ to a positive cone $P_<$ by setting $P_< = \{ g \in G \mid g> id\}$, and from a positive cone $P$ to an ordering $<_P$ by declaring $g<h \iff g^{-1}h \in P$.  It is straightforward to check that this correspondence is a bijection.  When $G$ admits a left ordering it will be called a \emph{left-orderable} group. When $G$ comes equipped with a prescribed left ordering, we will refer to it as a \emph{left-ordered group} and we denote such objects as a pair $(G, <)$.

Given a left-ordered group $(G,<)$, a \emph{$<$-cofinal} set $S \subset G$ (or simply ``cofinal" if the ordering of $G$ is understood) is a subset of $G$ satisfying
\[ G = \{ g \in G \mid \exists s, t \in S \mbox{ such that } s < g < t \}.
\]
An element $g \in G$ is called $<$-cofinal if the cyclic subgroup $\langle g \rangle$ is $<$-cofinal.

A \emph{circular ordering} of a group $G$ is a function $f:G^2 \rightarrow \{0, 1\}$ satisfying:
\begin{enumerate}[(i)]
\item $f(g, g^{-1}) = 1$ for all $g \in G \setminus \{ id \}$;
\item $f(id, g) = f(g, id) = 0$ for all $g \in G$;
\item $f$ is an inhomogeneous cocycle, that is $$f(g_2, g_3) - f(g_1g_2, g_3) + f(g_1, g_2g_3) - f(g_1, g_2) = 0$$ for all $g_1, g_2, g_3 \in G$.
\end{enumerate}

With this definition of circular ordering in hand, we define \emph{circularly-orderable} groups and \emph{circularly-ordered} groups in the obvious way.

\begin{rmk}
A circular ordering on a group is more commonly defined as a homogeneous cocycle $c:G^3 \to \{0, \pm 1\}$ satisfying $c(g_1,g_2,g_3) = 0$ if and only if $g_i = g_j$ for some $i \neq j$. However, for ease of exposition in cohomological arguments it is often more straightforward to define circular orderings in terms of inhomogeneous cocycles as we have done above. That these definitions are equivalent can be found in \cite[Proposition 2.3]{CG21}. 
\end{rmk}

We now construct a left-ordered central extension $(\widetilde{G}_f, <_f)$ of any circularly-ordered group $(G, f)$.  Let $\widetilde{G}_f = G \times \mathbb{Z}$ as a set, and equip it with the operation
\[ (g, n)(h, m) = (gh, n+m+f(g,h)).
\]
This group comes equipped with a left ordering $<_f$ whose positive cone is $\{(g,n) \mid n \geq 0\}$, and a canonical positive, central cofinal element $z_f = (id, 1)$.   To our knowledge, this construction first appears in \cite{Zeleva}. However, the underlying group construction is nothing more than the usual correspondence between elements of $H^2(G; \mathbb{Z})$ and equivalence classes of central extensions 
\[ 0 \rightarrow \mathbb{Z} \rightarrow \widetilde{G} \rightarrow G \rightarrow \{ id\}.
\]
For details of this correspondence, see \cite[Chapter 4]{brown}.

It is similarly possible to begin with a left-ordered group $G$ admitting a left ordering $<$ and a positive, cofinal, central element $z \in G$, and to construct a circular ordering $f_<$ on $G/\langle z \rangle$ according to the following rule.  Given $g \in G$, define $\{g \}$ to be the unique coset representative of $g \langle z \rangle$ satisfying $id \leq  \{ g\} < z$, and define $f_<$ according to $\{g\}\{h\} = \{gh\} z^{f_<(g\langle z \rangle , h \langle z \rangle)}$.   That this defines a circular ordering of $G/\langle z \rangle$ can be checked from the definition.

These two constructions are not inverse to one another, though applying the lift and quotient operations successively does yield a group that is naturally isomorphic to the original for categorical reasons (similarly when one applies the quotient and lift operations successively).  See \cite[Proposition 2.9]{CG} for details.
%, and Lemma \ref{canonical isom} below.

\subsection{Dynamic realisations and tight embeddings}

For a countable, left-ordered group $(G, <)$, we recall the notion of dynamic realisation as in \cite{BC}, see also \cite{DNR}. 

A \textit{gap} in $(G, <)$ is a pair of elements $g, h \in G$ with $g<h$ such that no element $f \in G$ satisfies $g<f<h$.  An order-preserving embedding $t: G \rightarrow \mathbb{R}$ is called a  \textit{tight embedding} of $(G, <)$ if whenever $(a, b) \subset \mathbb{R} \setminus t(G),$ there exists a gap $g, h \in G$ such that $(a,b) \subset (t(g), t(h))$.  We further require, for ease of exposition, that our tight embeddings satisfy $t(id) =0$.  One can check that the usual construction of $t:G \rightarrow \mathbb{R}$ in the definition of the dynamic realisation, using countability of $G$ to inductively construct an embedding by choosing midpoints of previously defined intervals as in \cite[Section 2.4]{CR16} or \cite[Chapter 1]{DNR}, provides an example of a tight embedding.

Define a \emph{dynamic realisation} $\rho_{<} : G \rightarrow \mathrm{Homeo}_+(\mathbb{R})$ of $(G, <)$ by setting $\rho_{<}(g)(t(h)) = t(gh)$ for all points in the image of $t$, using continuity to extend to points in $ \overline{t(G)}$, and then extending affinely to $\mathbb{R} \setminus \overline{t(G)}$.  This construction is also independent of choice of tight embedding, in the sense that different choices of tight embedding will yield conjugate dynamic realisations \cite[Proposition 3.1]{BC}.  The essential property of dynamic realisations is that they allow one to recover the ordering $<$ of $G$ by examining the orbit of $t(id) = 0$:
\[(\forall g, h \in G) [g<h \iff t(g) < t(h) \iff \rho_{<}(g)(0) < \rho_{<}(h)(0)].
\]

We now extend this to the realm of circular orderings.  Given a circular ordering $f$ of a countable group $G$, define the \emph{dynamic realisation} $\rho_f:G \rightarrow \mathrm{Homeo}_+(S^1)$ as follows.  

Choose a tight embedding $\tilde{t}: \widetilde{G}_f \rightarrow \mathbb{R}$, with associated dynamic realisation $\widetilde{\rho}_f : \widetilde{G}_f \rightarrow \mathrm{Homeo}_+(\mathbb{R})$.  Note that since $\langle z_f \rangle$ is unbounded in $\widetilde{G}_f$, the image $\tilde{t}(\langle z_f \rangle)$ is similarly unbounded in $\mathbb{R}$.  As such, the map $\widetilde{\rho}_f(z_f) : \mathbb{R} \rightarrow \mathbb{R}$ acts without fixed points, since $\widetilde{\rho}_f(z_f)(\tilde{t}(z_f^k)) = \tilde{t}(z_f^{k+1})$ for all $k \in \mathbb{Z}$.   Consequently this map is conjugate to one of  $sh(\pm 1) : \mathbb{R} \rightarrow \mathbb{R}$, where $sh(k)(x) = x+k$ for all $x \in \mathbb{R}$ and $k \in \mathbb{Z}$.  Since $z_f$ is positive in the left ordering of $\widetilde{G}_f$, it follows that $\widetilde{\rho}_f(z_f)$ is conjugate to $sh(1)$.  As such, we may assume (by applying the appropriate conjugation) that the tight embedding $\tilde{t}: \widetilde{G}_f \rightarrow \mathbb{R}$ satisfies $\tilde{t}(z_f^k) = k$  and $\tilde{t}(1) = 0$, and consequently that $\widetilde{\rho}_f(z_f) = sh(1)$.  Therefore we may assume that $\widetilde{\rho}_f : \widetilde{G}_f \rightarrow \mathrm{H}\widetilde{\mathrm{ome}}\mathrm{o}_+(S^1)$, where 
\[\mathrm{H}\widetilde{\mathrm{ome}}\mathrm{o}_+(S^1) = \{ f \in \mathrm{Homeo}_+(\mathbb{R}) \mid f(x+1) = f(x) +1 \}.
\]
%  \tg{Need to define  $\mathrm{H}\widetilde{\mathrm{ome}}\mathrm{o}_+(S^1)$}

Let $q : \mathrm{H}\widetilde{\mathrm{ome}}\mathrm{o}_+(S^1) \rightarrow \mathrm{Homeo}_+(S^1)$ denote the quotient map whose kernel consists of integral translations, and for arbitrary $g \in G$ let $\widetilde{g} = (g,0) \in \widetilde{G}_f$.   Define  $\rho_f:G \rightarrow \mathrm{Homeo}_+(S^1)$ by $\rho_f(g)(x) =  q(\widetilde{\rho}_f(\widetilde{g}))(x)$.  Note that 
\[ \rho_f(g) \circ \rho_f(h)(x) = q(\widetilde{\rho}_f(\widetilde{g} \widetilde{h}))(x),
\]
and that $\widetilde{g} \widetilde{h} = (g,0)(h,0) = (gh, f(g,h)) = (gh, 0)(1, f(g,h))$.  As such, 
\[ q(\widetilde{\rho}_f(\widetilde{g} \widetilde{h}))(x) = q(\widetilde{\rho}_f(\widetilde{gh}) \circ sh(f(g,h)))(x) = q(\widetilde{\rho}_f(\widetilde{gh}))(x),
\]
meaning $\rho_f:G \rightarrow \mathrm{Homeo}_+(S^1)$ is a homomorphism.   As before, this construction is defined up to conjugation.
%by a homeomorphism.

%\begin{prop}
%Suppose that $(G, f)$ is countable circularly-ordered group, and that $t, t' : \widetilde{G}_f \rightarrow \mathbb{R}$ are tight embeddings satisfying $t(z_f^k) = t'(z_f^k) =k$, and that these embeddings are used to construct dynamic realisations $\rho_{f}, \rho'_{f} : G \rightarrow \widetilde{\mathrm{Homeo}}_+(S^1)$ as above.  Then there exists an order-preserving homeomorphism $\phi : S^1 \rightarrow S^1$ such that 
%\[ \rho_{f}(g) = \phi \circ \rho'_{f}(g) \circ \phi^{-1}
%\]
%for all $g \in G$.
%\end{prop}
%\begin{proof}
%It follows from Proposition \ref{prop:conjugate} that $\widetilde{\rho}_f$ and $\widetilde{\rho}'_f$ are conjugate via some order-preserving homeomorphism $\psi : \mathbb{R} \rightarrow \mathbb{R}$.  Note further that  $\psi$ satisfies 
%\[ \widetilde{\rho}_f(z_f) \circ \psi = \psi \circ  \widetilde{\rho}'_f (z_f). 
%\]
%Here $\widetilde{\rho}_f(z_f) = \widetilde{\rho}'_f (z_f) = sh(1)$, so it follows that $\psi$ descends to the required order-preserving homeomorphism $\phi : S^1 \rightarrow S^1$.
%\end{proof}

Moreover, the map $t: = q \circ \tilde{t}$ provides an embedding $t: G \rightarrow S^1$; having fixed $\tilde{t}(id) = 0$ ensures that $\ker(q) \cap \tilde{t}(\widetilde{G}_f) = \langle z_f \rangle$.  This allows us to make a similar observation as in the case of dynamic realisations of left orderings:  Identify $S^1$ with $[0,1) \cong \mathbb{R}/ \mathbb{Z}$, and let $f_{S^1} : (S^1)^2 \rightarrow \{ 0, 1\}$ denote the standard circular ordering of $S^1$.    Then we have
\[ (\forall g, h \in G)[f(g,h) = f_{S^1}(t(g), t(h)) = f_{S^1}(\rho_f(g)(0), \rho_f(h)(0))]. 
\]
%Identifying $S^1$ with $\mathbb{R}/\mathbb{Z}$, we can define the dynamic realisation $\rho_f:G \rightarrow \mathrm{Homeo}_+(S^1)$ by $\rho_f(g)(x) = [\widetilde{\rho}_f(\widetilde{g})(\widetilde{x})]$, where:
%\begin{itemize}
%\item $\widetilde{g}$ is the element $(g,0) \in \widetilde{G}_f$, 
%\item $\widetilde{x}$ is an arbitrary lift of $x$, and 
%\item $[ \cdot ]$ denotes the equalence class in $\mathbb{R} / \mathbb{Z}$.
%\end{itemize}
%\textbf{Then we need to check that this all works.}

\subsection{Semi-conjugacy and bounded cohomology}

Representations $\rho:G \rightarrow \mathrm{Homeo}_+(S^1)$ are classified up to semiconjugacy according to their Euler class $eu(\rho) \in H^2_b(G ; \mathbb{Z})$ \cite{ghys01}.  Given a representation $\rho: G \rightarrow  \mathrm{Homeo}_+(S^1)$, we can explicitly describe a representative $\omega:G^2 \rightarrow \mathbb{Z}$ of the bounded Euler class $eu(\rho)$ as follows.  For each $g \in G$, all the choices of lifts $\widetilde{\rho(g)} : \mathbb{R} \rightarrow \mathbb{R}$ differ by an integral translation, so we can choose for each $g \in G$ a lift satisfying $\widetilde{\rho(g)}(0) \in [0,1)$.  Then a bounded representative of $eu(\rho)$ is given by:
\[ \omega(g,h) = \widetilde{\rho(g)}(\widetilde{\rho(h)}(0)) - \widetilde{\rho(gh)}(0), 
\]
which is an element of $\mathbb{Z}$ (see, e.g. \cite[Lemma 6.3]{ghys01}).

\begin{prop} If $G$ is a countable group and $f$ is a circular ordering of $G$, then $[f] = eu(\rho_f) \in H^2_b(G; \mathbb{R})$.
\end{prop}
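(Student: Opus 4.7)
The plan is to show the much stronger statement that the circular ordering cocycle $f$ and an explicit bounded representative $\omega$ of $eu(\rho_f)$ actually agree \emph{on the nose} as $\mathbb{Z}$-valued inhomogeneous $2$-cocycles, which immediately yields the equality of classes in $H^2_b(G;\mathbb{R})$. The key observation is that the lift $\widetilde{\rho}_f(\widetilde{g})$ of $\rho_f(g)$ afforded by the dynamic realisation is already the distinguished lift used in the standard formula for $\omega$.

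First I would verify that $\widetilde{\rho}_f(\widetilde{g})(0) \in [0,1)$ for every $g \in G$. By construction $\widetilde{\rho}_f(\widetilde{g})(0) = \widetilde{\rho}_f(\widetilde{g})(\widetilde{t}(id,0)) = \widetilde{t}(\widetilde{g}) = \widetilde{t}((g,0))$, so it suffices to check that $(id,0) \leq_f (g,0) <_f z_f$ in $\widetilde{G}_f$, which is immediate from the description of the positive cone $\{(g,n)\mid n\geq 0\}$: the element $(g,0)$ has non-negative second coordinate, and $(g,0)^{-1}z_f = (g^{-1},-1+1+f(g^{-1},id)) = (g^{-1},0)$ lies in the positive cone when $g \neq id$. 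Hence $\widetilde{\rho}_f(\widetilde{g})$ is precisely the preferred lift used to define the bounded cocycle representative $\omega$ of $eu(\rho_f)$.

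Next, using the multiplication in $\widetilde{G}_f$, I would rewrite
\[
\widetilde{g}\,\widetilde{h} \;=\; (g,0)(h,0) \;=\; (gh, f(g,h)) \;=\; (gh,0)\cdot (id, f(g,h)) \;=\; \widetilde{gh}\cdot z_f^{f(g,h)}.
\]
Applying the homomorphism $\widetilde{\rho}_f$ and using $\widetilde{\rho}_f(z_f) = sh(1)$ gives
\[
\widetilde{\rho}_f(\widetilde{g})\circ \widetilde{\rho}_f(\widetilde{h}) \;=\; \widetilde{\rho}_f(\widetilde{gh})\circ sh(f(g,h)).
\]
Evaluating at $0$ and using that elements of $\mathrm{H}\widetilde{\mathrm{ome}}\mathrm{o}_+(S^1)$ commute with integer translations yields
\[
\widetilde{\rho}_f(\widetilde{g})(\widetilde{\rho}_f(\widetilde{h})(0)) \;=\; \widetilde{\rho}_f(\widetilde{gh})(0) + f(g,h),
\]
so $\omega(g,h) = \widetilde{\rho}_f(\widetilde{g})(\widetilde{\rho}_f(\widetilde{h})(0)) - \widetilde{\rho}_f(\widetilde{gh})(0) = f(g,h)$. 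Therefore $\omega$ and $f$ coincide as cocycles, and in particular represent the same class in $H^2_b(G;\mathbb{R})$.

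I do not expect a substantive obstacle here; the whole argument is a bookkeeping exercise combining the product formula in $\widetilde{G}_f$ with the equivariance properties of the dynamic realisation. The only point requiring a little care is the range check $\widetilde{\rho}_f(\widetilde{g})(0) \in [0,1)$, because the definition of $\omega$ depends on this normalisation of lifts; once that is in place, the cocycle identity falls out of the central extension identity $\widetilde{g}\widetilde{h} = \widetilde{gh}\cdot z_f^{f(g,h)}$.
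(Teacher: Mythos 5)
Your proof is correct and takes essentially the same route as the paper's: both identify $\widetilde{\rho}_f(\widetilde{g})$ as the normalised lift (via $(id,0) \leq_f (g,0) <_f z_f$, which you verify directly from the positive cone) and then deduce the on-the-nose equality $\omega(g,h) = f(g,h)$ from the identity $\widetilde{g}\,\widetilde{h} = \widetilde{gh}\, z_f^{f(g,h)}$. The only cosmetic difference is that the paper evaluates the cocycle through the tight embedding $\tilde{t}$, whereas you apply the homomorphism property of $\widetilde{\rho}_f$ and commutation with integer translations; these are the same computation.
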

\begin{proof}
Let  $\tilde{t}: \widetilde{G}_f \rightarrow \mathbb{R}$ denote the tight embedding used in constructing the dynamic realisation $\rho_f : G \rightarrow \mathrm{Homeo}_+(S^1)$, recall that $\tilde{t}$ satisfies $\tilde{t}(z_f^k) = k$ and is order-preserving with respect to the orderings $<_f$ of $\widetilde{G}_f$ and $<$ of $\mathbb{R}$.    

For each $g \in G$ our choice of lift $\widetilde{\rho_f(g)}$ used to compute the bounded Euler class will be $\widetilde{\rho}_f(\widetilde{g})$, recall that $\widetilde{g} = (g,0)$.  Note that it satisfies $\widetilde{\rho}_f(\widetilde{g})(0) = t(g,0)$, and since $(id, 0) \leq_f (g, 0) <_f z_f$ we have $0 \leq t(g, 0) < 1$.  Then with these choices we are able to compute the following representative of $eu(\rho_f)$:
\[ \omega(g,h)= \widetilde{\rho}_f(\widetilde{g})(\widetilde{\rho}_f(\widetilde{h})(\tilde{t}(1))) - \widetilde{\rho}_f(\widetilde{gh})(\tilde{t}(1)) = \tilde{t}(\widetilde{g} \widetilde{h}) - \tilde{t}(\widetilde{gh}). 
\]
Since $\widetilde{g} \widetilde{h} = (g, 0) (h, g) = (gh, f(g,h))$ then $ \tilde{t}(\widetilde{g} \widetilde{h}) = \tilde{t}(\widetilde{gh} z_f^{f(g,h)}) = \tilde{t}(\widetilde{gh}) + f(g,h)$. This yields $\omega(g,h) = f(g,h)$.
\end{proof}

\begin{prop}
\label{semiconj of orders}
Suppose $f_1$ and $f_2$ are two circular orderings on a countable group $G$ such that the dynamic realisations $\rho_{f_1}, \rho_{f_2}$ are semiconjugate. Then $[f_1] = [f_2] \in H^2_b(G;\ZZ)$.
\end{prop}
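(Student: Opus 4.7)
The plan is to chain together the immediately preceding proposition with Ghys's classification of circle actions up to semiconjugacy. The preceding proposition identifies each circular ordering $f_i$ with the bounded Euler class of its dynamic realisation via the cocycle equation $\omega(g,h)=f_i(g,h)$. Ghys's theorem states that two representations $G \to \mathrm{Homeo}_+(S^1)$ are semiconjugate if and only if they have the same bounded Euler class in $H^2_b(G;\ZZ)$. Combining these two facts gives the result in a single line.

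Concretely, I would proceed as follows. First, apply the preceding proposition to each of $f_1$ and $f_2$ to obtain $[f_i] = eu(\rho_{f_i})$ for $i = 1,2$. Although the preceding proposition is stated with real coefficients, inspection of its proof shows that both sides are represented on the nose by the integer-valued bounded cocycle $(g,h) \mapsto f_i(g,h)$, so the equality lifts to $H^2_b(G;\ZZ)$, which is the coefficient ring appearing in the conclusion. Second, invoke Ghys's theorem \cite{ghys01}: since $\rho_{f_1}$ and $\rho_{f_2}$ are semiconjugate by hypothesis, they have equal bounded Euler classes, $eu(\rho_{f_1}) = eu(\rho_{f_2})$ in $H^2_b(G;\ZZ)$. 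Chaining the three equalities yields $[f_1] = eu(\rho_{f_1}) = eu(\rho_{f_2}) = [f_2]$ as required.

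There is no real obstacle to overcome here; all of the genuine content has already been carried out in the preceding proposition, whose proof exhibited the carefully normalised tight embedding satisfying $\tilde{t}(z_f^k) = k$ and $\tilde{t}(id) = 0$, so that the bounded Euler cocycle computed from the lift $\widetilde{\rho}_f$ reproduces $f$ exactly. The only point requiring a brief remark is the coefficient issue mentioned above, and this is handled simply by noting that the representative cocycle takes integer values.
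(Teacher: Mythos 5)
Your proposal is correct and takes essentially the same route as the paper's proof, which likewise combines the preceding proposition (identifying $[f_i]$ with $eu(\rho_{f_i})$ via the cocycle identity $\omega(g,h)=f_i(g,h)$) with Ghys's theorem that semiconjugate representations have equal bounded Euler classes. Your extra remark resolving the $H^2_b(G;\RR)$ versus $H^2_b(G;\ZZ)$ coefficient discrepancy is a point the paper glosses over, and it is handled correctly by observing that the representative cocycles coincide as integer-valued functions.
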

\begin{proof}
This follows immediately from the fact that $eu(\rho_{f_1}) = eu(\rho_{f_2})$ whenever $\rho_{f_1}$ and $\rho_{f_2}$ are semiconjugate (\cite[Theorem 6.6]{ghys01}, or \cite{ghys84}).
\end{proof}

%Owing to this Proposition, we can say that \emph{two circular orderings $f_1, f_2$ are semiconjugate} if and only if $[f_1] = [f_2] \in H^2_b(G;\ZZ)$, which happens if and only if $\rho_{f_1}, \rho_{f_2}$ are semi-conjugate in the classical sense.

%\tg{Remark here something about if you start with tight embeddings you actually get conjugate actions (not just semi-conjugate), but it's not really relevant to the rest of the paper. I vaguely remember talking about this on 13th Sept, and I can't remember if this is something we wanted to do or not.}

\subsection{Rotation and translation numbers, algebraically and dynamically}
This section prepares the necessary notation to discuss fractional Dehn twist coefficients from both dynamical 
 and algebraic perspectives.
 %, but also introduces several algebraic definitions of classical objects required to connect our algebraic arguments to their dynamical counterparts.  
 Background on the classic dynamical development of these ideas can be found in \cite{ghys01, Herman79}, the algebraic ideas appear also in \cite{BC21}.

If $G \subset \mathrm{H}\widetilde{\mathrm{ome}}\mathrm{o}_+(S^1)$, we define the dynamical translation number (i.e., the classical translation number due to Poincar\'{e} \cite{H1885}) of an element $g \in G$  as 
\[
\tau^D(g) = \lim_{n \to \infty} \frac{g^n(x) - x}{n}, 
\]
where $x \in \mathbb{R}$ is arbitrary. The limit exists and is independent of $x \in \mathbb{R}$ \cite[Proposition 2.3]{Herman79}.  
%From here we arrive at rotation numbers as follows: Recall that
Using $q:  \mathrm{H}\widetilde{\mathrm{ome}}\mathrm{o}_+(S^1) \rightarrow \mathrm{Homeo}_+(S^1)$ to denote the quotient map, if $G \subset \mathrm{Homeo}_+(S^1)$ then write $\widetilde{g} \in q^{-1}(g)$ to denote an arbitrary choice of preimage.  Then we define the dynamic rotation number of $g$ to be 
\[\mathrm{rot}^D(g) = \tau^D(\widetilde{g}) \mod \mathbb{Z}.
\]
This definition is independent of the choice of $\widetilde{g}$.

On the other hand, these definitions can also be described algebraically.  Given a left-ordered group $(G, <)$ and a positive, cofinal central element $z \in G$,  we define the floor of $g$ relative to $<$ to be the unique integer $[g]_<$ such that $z^{[g]_<} \leq g < z^{[g]_< +1}$.  Then for every $g \in G$, we can define the  \emph{algebraic translation number} of $g \in G$ relative to $<$ to be 
\[ \tau^A_<(g) = \lim_{n \to \infty}\frac{[g^n]_<}{n}.
\]
This limit always exists as the sequence $\{ [g^n]_< \}_{n \geq 0}$ satisfies $[g^n]_<+[g^m]_< \leq [g^{m+n}]_<$ and is therefore superadditive, so we may apply Fekete's lemma. 

There is a special circumstance where we have already seen that left orderings with cofinal central elements arise naturally.  Given a circular ordering $f$ of a group $G$, recall that the left-ordered lift $(\widetilde{G}_f, <_f)$ comes equipped with a positive, cofinal central element $z_f$.  To simplify notation, in this setting we will write $[g]_f$ in place of $[g]_{<_f}$ and $\tau^A_f(g)$ in place of $\tau^A_{f_<}(g)$ for all $g \in \widetilde{G}_f$.  Then for every $g \in G$ we define the \emph{algebraic rotation number} of $g$ to be 
\[ \mathrm{rot}_f^A(g) = \tau^A_f(g, k) \mod \mathbb{Z}
\] 
where $(g, k) \in \widetilde{G}_f$ and $k \in \mathbb{Z}$ is arbitrary.  We observe that this definition is independent of $k$ by noting that $(g,k)^n = (g^n, \sum_{i=1}^{n-1}f(g^i, g) + nk)$, so that $[(g,k)^n]_f = \sum_{i=1}^{n-1}f(g^i, g) + nk$.  It follows that for $k, k' \in \mathbb{Z}$
we have $ \tau^A_f(g, k) -\tau^A_f(g,k') = k-k'$, so that $\mathrm{rot}_f^A(g)$ is well-defined.

%We connect the notion of algebraic and dynamical rotation and translation numbers via the dynamical realization, as follows.  Suppose that $G$ is countable and equipped with a circular ordering $f$.  Let $\widetilde{\rho}_f : \widetilde{G}_f \rightarrow \mathrm{H}\widetilde{\mathrm{ome}}\mathrm{o}_+(S^1)$ denote the dynamic realisation of $(\widetilde{G}_f, <_f)$, where we assume that $\widetilde{\rho}_f(z_f)(x) = x+1$ for all $x \in \mathbb{R}$.  Define the \emph{dynamic translation number} of $g \in \widetilde{G}_f$ to be 
%\[
%\tau^D_f(g) = \tau^D(\widetilde{\rho}_f(g))\] For $g \in G$, define the \emph{dynamic rotation number} of $g$ to be 
%\[\mathrm{rot}_f^D(g) = \mathrm{rot}^D(\rho_f(g)).
%\]

\begin{prop}
\label{order to rep}
Let $(G,<)$ be a countable left-ordered group with positive, cofinal central element $z$, and $\rho : G \rightarrow  \mathrm{H}\widetilde{\mathrm{ome}}\mathrm{o}_+(S^1)$ a dynamic realisation of $<$ satisfying $\rho(z)(x) = x+1$ for all $x \in \mathbb{R}$.  Then $\tau^A_<(g) = \tau^D(\rho(g))$ for all $g \in G$.
\end{prop}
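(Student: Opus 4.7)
The plan is to use the tight embedding $t: G \to \mathbb{R}$ underlying the dynamic realisation as the bridge between the algebraic floor $[g]_<$ and the dynamical translation $\rho(g)^n(0)$. Recall that $t$ is order-preserving with $t(\id) = 0$ and satisfies $\rho(g)(t(h)) = t(gh)$ for all $g,h \in G$. The first key normalisation I would establish is that $t(z^k) = k$ for every $k \in \ZZ$: since $z$ is central, we have $\rho(z)(t(g)) = t(zg) = t(gz)$, and the hypothesis $\rho(z)(x) = x+1$ then gives $t(gz) = t(g)+1$; induction with $g = \id$ yields the claim.

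Next I would identify the floor $[g]_<$ with $\lfloor t(g)\rfloor$. Indeed, by definition of the floor we have $z^{[g]_<} \le g < z^{[g]_<+1}$, and applying the order-preserving map $t$ together with the normalisation above gives
\[
[g]_< \;=\; t(z^{[g]_<}) \;\le\; t(g) \;<\; t(z^{[g]_<+1}) \;=\; [g]_< + 1,
\]
so $[g]_< = \lfloor t(g)\rfloor$. This identification is the heart of the argument: it lets the algebraic floor be read off from the dynamic embedding.

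With this in hand, the two translation numbers become two averages of the same sequence. Taking $x = 0 = t(\id)$ in the definition of $\tau^D$ and using the defining property of the dynamic realisation, I compute
\[
\rho(g)^n(0) \;=\; \rho(g^n)(t(\id)) \;=\; t(g^n),
\]
so $\tau^D(\rho(g)) = \lim_{n\to\infty} t(g^n)/n$. On the other hand $\tau^A_<(g) = \lim_{n\to\infty} \lfloor t(g^n)\rfloor / n$ by the preceding paragraph. Since $|t(g^n) - \lfloor t(g^n)\rfloor| < 1$, the two sequences differ by at most $1/n$, so they share the same limit, giving $\tau^A_<(g) = \tau^D(\rho(g))$.

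I do not anticipate a substantive obstacle here: the existence of both limits is already in hand (Fekete's lemma on the algebraic side, and Poincaré's classical result on the dynamical side), and the only real content is the translation $t(z^k) = k$ together with the floor identification. The one subtlety worth flagging is to make sure the hypothesis $\rho(z)(x) = x+1$ is used to pin down the tight embedding on the subgroup $\langle z\rangle$ precisely, rather than merely up to conjugacy; once that is done, everything collapses to the bounded comparison between $t(g^n)$ and $\lfloor t(g^n)\rfloor$.
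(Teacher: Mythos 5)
Your proposal is correct and takes essentially the same approach as the paper: both proofs use the tight embedding to get $\rho(g^n)(0) = t(g^n)$, identify the algebraic floor as $[g]_< = \lfloor t(g) \rfloor$ via the normalisation $t(z^k) = k$, and then observe that the two limits agree because $t(g^n)$ and $\lfloor t(g^n) \rfloor$ differ by less than $1$. The paper's proof is merely terser, leaving implicit the verifications (of $t(z^k)=k$ and the floor identification) that you spell out.
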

\begin{proof}
Let $t :G \rightarrow \mathbb{R}$ denote the tight embedding used to construct $\rho$. Observe that for all $g \in G$ and for all $n \in \mathbb{Z}$ we have $\rho(g^n)(0) =\rho(g^n)(t(id)) = t(g^n)$.

Then as $t(z^k) = k$, note that $[g]_< = \lfloor t(g) \rfloor$.  Consequently
\[ \tau^A_<(g) = \lim_{n \to \infty} \frac{[g^n]_<}{n} = \lim_{n \to \infty} \frac{\lfloor \rho(g^n)(0) \rfloor}{n}  =  \lim_{n \to \infty} \frac{ \rho(g^n)(0) }{n} =  \tau^D(\rho(g)). \hfill \qedhere
\]

\end{proof}

Proposition \ref{order to rep} begins with an ordered group and shows that the canonical representation corresponding to the order allows one to recover the algebraic translation numbers from the dynamics of the action.  On the other hand, the next proposition starts with a representation and builds an ordering of $G$ whose algebraic translation numbers agree with those arising from the given dynamics.

\begin{prop}
\label{rep to order}
Suppose that $\rho:G \rightarrow  \mathrm{H}\widetilde{\mathrm{ome}}\mathrm{o}_+(S^1)$ is an injective homomorphism and that $z \in G$ satisfies $\rho(z)(x) = x+1$ for all $x \in \mathbb{R}$.  Fix a left ordering $\prec$ of $G$, and define a new left ordering $<$ of $G$ according to the rule $g<h$ if and only if $\rho(g)(0) < \rho(h)(0)$ or $\rho(g)(0) = \rho(h)(0)$ and $g \prec h$.  Then $z$ is positive and cofinal relative to the ordering $<$ of $G$, and $\tau^D(\rho(g)) = \tau^A_<(g)$ for all $g \in G$.
\end{prop}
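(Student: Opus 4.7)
The plan is to verify three things in order: that $<$ is a left ordering, that $z$ is positive and cofinal, and that the translation numbers agree. The key observation that makes everything work is that the definition of $<$ is designed precisely so that $[g]_<$ equals $\lfloor \rho(g)(0) \rfloor$, after which the proof of Proposition \ref{order to rep} can essentially be re-run.

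First I would check that $<$ is a strict total order. Totality requires that any two $g,h$ with $\rho(g)(0) = \rho(h)(0)$ are distinguished by $\prec$, which is fine since $\rho$ is injective implies $g=h$ or they are $\prec$-comparable. Irreflexivity and transitivity follow by splitting into cases based on whether $\rho(g)(0), \rho(h)(0)$ agree. Left-invariance is the substantive point: if $g < h$ and $\rho(g)(0) < \rho(h)(0)$, then since $\rho(f)$ is an orientation-preserving homeomorphism of $\mathbb{R}$ we get $\rho(fg)(0) = \rho(f)(\rho(g)(0)) < \rho(f)(\rho(h)(0)) = \rho(fh)(0)$, so $fg < fh$; if instead $\rho(g)(0) = \rho(h)(0)$ and $g \prec h$, then $\rho(fg)(0) = \rho(fh)(0)$ and $fg \prec fh$ by left-invariance of $\prec$, so again $fg < fh$.

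Next I would show $z > id$ and establish the explicit formula $[g]_< = \lfloor \rho(g)(0) \rfloor$. Since $\rho(z)(0) = 1 > 0 = \rho(id)(0)$ we have $z > id$ immediately. For the floor formula, note $\rho(z^k)(0) = k$ for all $k \in \ZZ$, so if $n = \lfloor \rho(g)(0) \rfloor$ then $\rho(z^n)(0) = n \leq \rho(g)(0) < n+1 = \rho(z^{n+1})(0)$. The right inequality is strict on values at $0$, forcing $g < z^{n+1}$. The left inequality gives $z^n \leq g$ by definition of $<$ (regardless of which way $\prec$ breaks the potential tie). Hence $z^n \leq g < z^{n+1}$, proving both cofinality of $z$ and $[g]_< = \lfloor \rho(g)(0) \rfloor$.

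Finally, applying this to $g^n$ in place of $g$ gives
\[
\tau^A_<(g) = \lim_{n \to \infty} \frac{[g^n]_<}{n} = \lim_{n \to \infty} \frac{\lfloor \rho(g^n)(0) \rfloor}{n} = \lim_{n \to \infty} \frac{\rho(g^n)(0)}{n} = \tau^D(\rho(g)),
\]
where the middle equality uses $|\lfloor x \rfloor - x| < 1$ to replace the floor with the exact value after dividing by $n$, and the existence of the final limit is the classical statement about translation numbers. No step looks like a genuine obstacle; the only subtle point is keeping the tie-breaking by $\prec$ straight in the cofinality argument, which is why making the formula $[g]_< = \lfloor \rho(g)(0) \rfloor$ explicit is worth doing before attempting the translation-number computation.
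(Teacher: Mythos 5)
Your route is the same as the paper's: everything is made to rest on a floor formula relating $[g]_<$ to $\lfloor \rho(g)(0) \rfloor$, after which the translation-number identity is the same three-step limit computation as in Proposition \ref{order to rep}. Your extra verifications (that $<$ is indeed a left ordering, and that $z$ is positive) are correct, and are details the paper leaves to the reader.

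However, your parenthetical claim that $\rho(z^n)(0) \leq \rho(g)(0)$ forces $z^n \leq g$ ``regardless of which way $\prec$ breaks the potential tie'' is false, and with it the exact equality $[g]_< = \lfloor \rho(g)(0) \rfloor$. Suppose $\rho(g)(0) = n \in \ZZ$ with $g \neq z^n$ and $g \prec z^n$; such $g$ exist whenever some nontrivial $h \in G$ satisfies $\rho(h)(0) = 0$ (take $g = z^n h$, replacing $h$ by $h^{-1}$ if necessary to arrange $h \prec id$, and note $\rho(z^n h)(0) = n$ while $z^n h \prec z^n$ by left-invariance of $\prec$). Then the definition of $<$ gives $g < z^n$, while $z^{n-1} < g$ because $\rho(z^{n-1})(0) = n-1 < n = \rho(g)(0)$; hence $[g]_< = n-1 = \lfloor \rho(g)(0) \rfloor - 1$. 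So the direction of the tie-break does matter. The repair is cheap: for every $g$ one always has the strict inequalities $z^{\lfloor \rho(g)(0) \rfloor - 1} < g < z^{\lfloor \rho(g)(0) \rfloor + 1}$ (both comparisons happen at distinct real values, so $\prec$ never intervenes), hence
\[
\lfloor \rho(g)(0) \rfloor - 1 \;\leq\; [g]_< \;\leq\; \lfloor \rho(g)(0) \rfloor ,
\]
which already gives positivity and cofinality of $z$, and an additive error of at most $1$ vanishes after dividing by $n$, so $\tau^A_<(g) = \tau^D(\rho(g))$ still follows. You should state the floor relation as this two-sided bound rather than as an equality. (For what it is worth, the paper's own proof asserts the exact equality $\lfloor \rho(g)(0) \rfloor = [g]_<$ without justification and so carries the same blemish; it is your attempt to actually prove it that brings the issue to the surface.)
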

\begin{proof}
That $z$ is positive and $<$-cofinal follows from the definition of $<$.  Next, note that $\lfloor \rho(g)(0) \rfloor = [g]_<$ for all $g \in G$.  Therefore
\[ \tau^D(\rho(g)) = \lim_{n \to \infty} \frac{\rho(g^n)(0)}{n} =  \lim_{n \to \infty} \frac{\lfloor \rho(g^n)(0) \rfloor }{n} = \lim_{n \to \infty} \frac{[g^n]_<}{n} = \tau^A_<(g). \hfill \qedhere
\]
\end{proof}

\begin{cor}
If $(G, f)$ is a countable circularly-ordered group and $\rho_f: G \rightarrow \mathrm{Homeo}_+(S^1)$ the corresponding dynamic realisation, then for every $g \in G$, $\mathrm{rot}^A_f(g) = \mathrm{rot}^D(\rho_f(g))$.
\end{cor}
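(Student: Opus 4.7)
The plan is to reduce the corollary to Proposition \ref{order to rep} applied to the central extension $(\widetilde{G}_f, <_f)$, since everything dynamical on $S^1$ in the definition of $\rho_f$ was constructed by first solving the problem on $\mathbb{R}$ and then pushing down via the quotient $q : \mathrm{H}\widetilde{\mathrm{ome}}\mathrm{o}_+(S^1) \to \mathrm{Homeo}_+(S^1)$.

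First I would fix $g \in G$ and set $\widetilde{g} = (g,0) \in \widetilde{G}_f$. Recall from the construction of the dynamic realisation that, after the prescribed normalisation of the tight embedding $\tilde{t}$, the map $\widetilde{\rho}_f : \widetilde{G}_f \to \mathrm{H}\widetilde{\mathrm{ome}}\mathrm{o}_+(S^1)$ satisfies $\widetilde{\rho}_f(z_f)(x) = x+1$, and $\rho_f(g) = q(\widetilde{\rho}_f(\widetilde{g}))$. In particular $\widetilde{\rho}_f(\widetilde{g})$ is a valid choice of lift of $\rho_f(g)$ to $\mathrm{H}\widetilde{\mathrm{ome}}\mathrm{o}_+(S^1)$, so by the definition of the dynamical rotation number,
\[
\mathrm{rot}^D(\rho_f(g)) = \tau^D(\widetilde{\rho}_f(\widetilde{g})) \mod \mathbb{Z}.
\]

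Next I would apply Proposition \ref{order to rep} to the countable left-ordered group $(\widetilde{G}_f, <_f)$ with positive cofinal central element $z_f$ and dynamic realisation $\widetilde{\rho}_f$. This yields
\[
\tau^A_{<_f}(\widetilde{g}) = \tau^D(\widetilde{\rho}_f(\widetilde{g})).
\]
On the other hand, the definition of the algebraic rotation number (taking $k=0$) gives $\mathrm{rot}^A_f(g) = \tau^A_f(g,0) \mod \mathbb{Z} = \tau^A_{<_f}(\widetilde{g}) \mod \mathbb{Z}$. Chaining these three identities together gives $\mathrm{rot}^A_f(g) = \mathrm{rot}^D(\rho_f(g))$.

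The proof is essentially a bookkeeping exercise; there is no substantive obstacle, since all the real work has already been done in Proposition \ref{order to rep} and in the construction of $\rho_f$ from $\widetilde{\rho}_f$. The only thing to be a little careful about is matching the normalisation $\widetilde{\rho}_f(z_f) = sh(1)$ (which is exactly the hypothesis of Proposition \ref{order to rep}) and observing that $\widetilde{\rho}_f(\widetilde{g})$, rather than some other lift, is the lift through which $\rho_f(g)$ is defined, so that no extra integer correction appears when reducing mod $\mathbb{Z}$.
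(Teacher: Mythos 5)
Your proposal is correct and is essentially the paper's own (implicit) argument: the corollary is stated without proof precisely because it follows by applying Proposition \ref{order to rep} to $(\widetilde{G}_f, <_f)$ with central element $z_f$ and the normalised dynamic realisation $\widetilde{\rho}_f$, then reducing mod $\mathbb{Z}$ using the definitions of $\mathrm{rot}^A_f$ and $\mathrm{rot}^D$. Your attention to the normalisation $\widetilde{\rho}_f(z_f) = sh(1)$ and to the fact that $\widetilde{\rho}_f(\widetilde{g})$ is the lift defining $\rho_f(g)$ is exactly the right bookkeeping.
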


Last, we observe that these quantities are conjugation-invariant, which is an easy observation from the classical definitions.  We highlight this fact here as it will be needed in the proof of Theorem \ref{main theorem}.

\begin{prop}
\label{conjugacy invariance}
If $(G, f)$ is a  circularly-ordered group, then $\tau^A_f(g) = \tau^A_f(hgh^{-1})$ for all $g ,h \in \widetilde{G}_f$ and $\mathrm{rot}^A_f(g) = \mathrm{rot}^A_f(hgh^{-1})$ for all $g, h \in G$.
\end{prop}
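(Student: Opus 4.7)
The plan is to prove both statements by establishing that, for any left-ordered group $(G,<)$ with a positive, cofinal, central element $z$, conjugation changes the floor $[\cdot]_<$ by at most $1$. Once this is in place, the translation number result follows by dividing by $n$ and letting $n \to \infty$, and the rotation number result follows by lifting to $\widetilde{G}_f$ and reducing modulo $\ZZ$.

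First I would record the super-additivity $[a]_< + [b]_< \leq [ab]_<$ for arbitrary $a,b \in G$, which is immediate from $z^{[a]_<} \leq a$, $z^{[b]_<} \leq b$, centrality of $z$, and left-invariance: $z^{[a]_<+[b]_<} = z^{[a]_<}z^{[b]_<} \leq a\,z^{[b]_<} \leq ab$. Applying this to $h$ and $h^{-1}$ yields $[h]_< + [h^{-1}]_< \leq 0$, and combining $z^0 = hh^{-1} < z^{[h]_<+1}z^{[h^{-1}]_<+1} = z^{[h]_<+[h^{-1}]_<+2}$ shows $[h]_<+[h^{-1}]_< \geq -1$. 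Thus $[h]_< + [h^{-1}]_< \in \{-1,0\}$.

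Next, applying super-additivity twice produces the two-sided bound
\[
[h]_< + [g]_< + [h^{-1}]_< \leq [hgh^{-1}]_< \quad \text{and} \quad [h^{-1}]_< + [hgh^{-1}]_< + [h]_< \leq [g]_<,
\]
so
\[
[h]_<+[h^{-1}]_< \;\leq\; [hgh^{-1}]_< - [g]_< \;\leq\; -\bigl([h]_<+[h^{-1}]_<\bigr).
\]
By the previous step, $|[hgh^{-1}]_< - [g]_<| \leq 1$, uniformly in $g$. Now specialise to $(G,<) = (\widetilde{G}_f, <_f)$ with $z = z_f$. Since $(hgh^{-1})^n = hg^nh^{-1}$, the uniform bound gives $|[hg^nh^{-1}]_f - [g^n]_f| \leq 1$ for every $n$, and dividing by $n$ and letting $n \to \infty$ yields $\tau^A_f(hgh^{-1}) = \tau^A_f(g)$.

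For the rotation number claim, I would lift $g,h \in G$ to $(g,0), (h,0) \in \widetilde{G}_f$ and observe that $(h,0)(g,0)(h,0)^{-1} = (hgh^{-1}, k)$ for some $k \in \ZZ$ depending only on $f$, $g$, and $h$. By the translation-number conjugacy invariance already established in $\widetilde{G}_f$, we have $\tau^A_f\bigl((hgh^{-1},k)\bigr) = \tau^A_f\bigl((g,0)\bigr)$, and reducing modulo $\ZZ$ (using the well-definedness of $\mathrm{rot}^A_f$ noted just before the proposition) gives $\mathrm{rot}^A_f(hgh^{-1}) = \mathrm{rot}^A_f(g)$. There is no genuine obstacle here; the only point worth care is the elementary pigeonhole showing $[h]_< + [h^{-1}]_< \in \{-1,0\}$, which is what forces the conjugation discrepancy to be bounded by a constant rather than merely sublinear.
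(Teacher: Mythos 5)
Your proof is correct and follows essentially the same route as the paper: both arguments establish that conjugation by $h$ changes the floor by at most $1$ uniformly (the paper by directly sandwiching $hg^nh^{-1}$ between $z_f^{[g^n]_f-1}$ and $z_f^{[g^n]_f+2}$, you via superadditivity of the floor plus the pigeonhole $[h]_<+[h^{-1}]_< \in \{-1,0\}$), then divide by $n$ and pass to the limit, with the rotation-number claim reduced to the translation-number claim in $\widetilde{G}_f$. The difference is purely one of bookkeeping, not of substance.
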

\begin{proof}
From $z_f^{[g^n]_f} \leq g^n < z_f^{[g^n]_f +1}$ and $z_f^{[h]_f} \leq h < z_f^{[h]_f +1}$ one finds $z_f^{[g^n]_f-1} \leq hg^nh^{-1} < z_f^{[g^n]_f +2}$, so that $[g^n]_f-1 \leq [(hgh^{-1})^n]_f \leq [g^n]_f+1$.  It follows that 
\[\tau^A_f(g) = \lim_{n \to \infty}\frac{[g^n]_f}{n} = \lim_{n \to \infty}\frac{[(hgh^{-1})^n]_f}{n} = \tau^A_f(hgh^{-1}).
\]
It follows that rotation number is also conjugation-invariant.
\end{proof}

\section{Mapping class groups}\label{sec:mcg-basics}

In this section we will recall some useful facts about mapping class groups and prove Lemma \ref{lem:modaut-inner}, a key step in the proof of Theorem \ref{main theorem}. For an introduction to mapping class groups, see \cite{FM12}.

Let $\Sigma_{g,n}^b$ be a compact orientable surface of genus $g$ with $b$ boundary components and $n$ marked points disjoint from $\partial\Sigma_{g,n}^b$. Denote the set of marked points by $\vP$. If $n$ or $b$ is 0 we will omit the subscript or superscript. Let $\Homeo^+(\Sigma_{g,n}^b,\vP,\partial\Sigma_{g,n}^b)$ be the set of orientation-preserving homeomorphisms $f$ of $\Sigma_{g,n}^b$ so that $f(\vP) = \vP$ and $f|_{\partial \Sigma_{g,n}^b} = \text{Id}$. Note that if $\partial\Sigma_{g,n}^b \neq \emptyset$, then all homeomorphisms fixing the boundary pointwise are orientation-preserving.

The \emph{mapping class group} is the quotient group 
\[
\Mod(\Sigma_{g,n}^b) = \Homeo^+(\Sigma_{g,n}^b,\vP,\partial\Sigma_{g,n}^b)/\Homeo_0(\Sigma_{g,n}^b,\vP,\partial\Sigma_{g,n}^b),
\]
where $\Homeo_0(\Sigma_{g,n}^b,\vP,\partial\Sigma_{g,n}^b)$ is the subgroup of homeomorphisms isotopic to the identity. The isotopies must be via elements of $\Homeo^+(\Sigma_{g,n}^b,\vP,\partial\Sigma_{g,n}^b)$. Elements of the mapping class group are referred to as \emph{mapping classes}.

Let $\alpha$ be a simple closed curve disjoint from $\partial  \Sigma_{g,n}^b$ and $\vP$. Choose a regular neighbourhood of $\alpha$ that is homeomorphic, via an orientation-preserving homeomorphism, to an annulus $A \simeq S^1 \times I$ disjoint from $\partial  \Sigma_{g,n}^b$ and $\vP$. Define the \emph{Dehn twist about $\alpha$}, denoted $T_\alpha$, to be the homeomorphism of $ \Sigma_{g,n}^b$ given by $(s,t) \mapsto (se^{-2\pi i t},t)$ on $A$, and the identity outside of $A$.

Once an orientation on $ \Sigma_{g,n}^b$ has been fixed, the isotopy class of a Dehn twist $T_\alpha$ depends only on the unoriented isotopy class of $\alpha$. Therefore if $a$ is the unoriented isotopy class of $\alpha$, we may abuse notation and write the mapping class $[T_\alpha] \in \Mod(\Sigma_{g,n}^b)$ as $T_\alpha$ or $T_a$. It is important to note that Dehn twists have infinite order \cite[Proposition 3.2]{FM12} and if $a$ is isotopic to a boundary component, then $T_a$ is central in $\Mod(\Sigma_{g,n}^b)$ \cite[Fact 3.8]{FM12}. 

\subsection{Inner automorphisms and capping}
By gluing on a disk with one marked point to the boundary of $\Sigma_g^1$ we obtain $\Sigma_{g,1}$. By extending homeomorphisms of $\Sigma_g^1$ by the identity on the marked disk, we obtain the central extension
\[
1 \lra \langle T_d\rangle \lra \Mod(\Sigma_g^1) \lra \Mod(\Sigma_{g,1}) \lra 1,
\]
where $d$ is the isotopy class of the boundary curve. The process of obtaining $\Sigma_{g,1}$ from $\Sigma_g^1$ like this is commonly called \emph{capping the boundary}, and the surjective map $\Mod(\Sigma_g^1) \to \Mod(\Sigma_{g,1})$ is the \emph{capping homomorphism}. Note that a similar capping homomorphism exists for any surface with at least 1 boundary component (see \cite[Section 3.6.2]{FM12}), but we will not need the full generality. 

Before we prove Lemma \ref{lem:modaut-inner}, we will need the following general lemma about automorphisms of central extensions.

\begin{lem} \label{lem:extension-automorphism}
Let $1 \to A \hookrightarrow G \to H \to 1$ be a central extension of $H$ by $A$. Let $\phi,\psi \in \Aut(G)$ be such that $\phi(A) = \psi(A) = A$, and such that the induced automorphisms $\ol{\phi},\ol{\psi} \in \Aut(H)$ satisfy $\ol{\phi} = \ol{\psi}$. Then $\eta:G \to A$ given by $\eta(g) = \phi(g)\psi(g)^{-1}$ is a homomorphism.
\end{lem}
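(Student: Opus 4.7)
The plan is a direct computation, with the only subtle point being to check that $\eta$ takes values in $A$ before exploiting centrality.

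First I would verify that $\eta(g) \in A$ for every $g \in G$. Denoting by $\pi: G \to H$ the quotient map, the hypothesis $\ol{\phi} = \ol{\psi}$ means $\pi(\phi(g)) = \pi(\psi(g))$ for all $g$, so $\phi(g)\psi(g)^{-1} \in \ker\pi = A$. Thus $\eta$ is a well-defined set map $G \to A$.

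Next I would expand $\eta(gh)$ using that $\phi$ and $\psi$ are homomorphisms:
\[
\eta(gh) = \phi(g)\phi(h)\psi(h)^{-1}\psi(g)^{-1}.
\]
Now I would observe that the middle factor $\phi(h)\psi(h)^{-1} = \eta(h)$ lies in $A$, and since $A$ is central in $G$ it commutes with $\phi(g)$ and with $\psi(g)^{-1}$. Sliding $\eta(h)$ to the right yields
\[
\eta(gh) = \phi(g)\psi(g)^{-1}\cdot \phi(h)\psi(h)^{-1} = \eta(g)\eta(h).
\]

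The argument has no real obstacle; the one thing to be careful of is that centrality of $A$ in $G$ is used twice: once implicitly to know that $A$ is abelian (so the order of factors in $A$ does not matter) and once explicitly to commute $\eta(h) \in A$ past $\psi(g)^{-1} \in G$. No hypothesis beyond centrality of $A$ and the equality of the induced automorphisms is needed, which is exactly what the statement provides.
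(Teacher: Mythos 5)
Your proof is correct and follows essentially the same route as the paper: verify that $\ol{\phi}=\ol{\psi}$ forces $\eta(g)\in A$, expand $\eta(gh)=\phi(g)\phi(h)\psi(h)^{-1}\psi(g)^{-1}$, and use centrality of $A$ to slide $\eta(h)$ past $\psi(g)^{-1}$. One small inaccuracy in your closing remark: abelianness of $A$ is never actually invoked (the computation never reorders two elements of $A$, and a homomorphism into a nonabelian group is still a homomorphism), but this does not affect the argument.
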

\begin{proof}
Note that since $\ol{\phi} = \ol{\psi}$, $\eta(g)$ is indeed an element of $A$. For $g,h \in G$ we have
\[
\eta(gh) = \phi(gh)\psi(gh)^{-1} = \phi(g)\phi(h)\psi(h)^{-1}\psi(g)^{-1} = \phi(g)\eta(h)\psi(g)^{-1} = \phi(g)\psi(g)^{-1}\eta(h) = \eta(g)\eta(h),
\]
completing the proof.
\end{proof}

For two isotopy classes $a,b$ of simple closed curves on a surface, denote the \emph{geometric intersection number} by $i(a,b)$, that is, the minimum number of intersection points between any representatives of $a$ and $b$. It is a useful fact that $T_aT_bT_a = T_bT_aT_b$ if and only if $i(a,b) = 1$ \cite[Propositions 3.11 and 3.13]{FM12}.

A \emph{$k$-chain} on a surface is a $k$-tuple $(a_1,\ldots,a_k)$ of isotopy classes of simple closed curves such that $i(a_j,a_{j+1}) = 1$ for all $j \in \{1,\ldots,k-1\}$, and $i(a_j,a_l) = 0$ otherwise.  Choose representatives $\alpha_i$ of $a_i$ so that the $\alpha_i$ are in minimal position. If $k$ is even, a regular neighbourhood of $\cup_{i=1}^k \alpha_i$ is homeomorphic to a genus $\frac k2$ surface with 1 boundary component. Let $e$ be the isotopy class of the boundary component. The relation
\[
(T_{a_1}T_{a_2}\cdots T_{a_k})^{2k+2} = T_e
\]
holds, and is known as the \emph{chain relation} \cite[Proposition 4.12]{FM12}. It follows that if $(a_1,\ldots,a_{2g})$ is any $2g$-chain on $\Sigma_g^1$, 
\[
(T_{a_1}T_{a_2}\cdots T_{a_{2g}})^{4g+2} = T_d
\]
where $d$ is the isotopy class of the boundary of $\Sigma_g^1$.

Before embarking on the proof of Lemma \ref{lem:modaut-inner}, we must recall an important result due to Ivanov (\cite[Theorem 2]{Ivanov97}, see also \cite{Ivanov88}). Let $g \geq 2, n \geq 1$, and let $\Mod^{\pm}(\Sigma_{g,n})$ denote the \emph{extended mapping class group} of $\Sigma_{g,n}$ (that is, the mapping class group where we allow orientation-reversing homeomorphisms).  Note that $\Mod(\Sigma_{g,n})$ is an index-2 subgroup of $\Mod^{\pm}(\Sigma_{g,n})$. Ivanov's theorem states that the map $\Mod^{\pm}(\Sigma_{g,n}) \to \Aut(\Mod(\Sigma_{g,n}))$ given by $\gamma \mapsto (f \mapsto \gamma f \gamma^{-1})$ is an isomorphism. It follows that for all isotopy classes of simple closed curves $a$ on $\Sigma_{g,n}$, if $\gamma \in \Mod(\Sigma_{g,n})$, then $\gamma T_a \gamma^{-1} = T_{\gamma(a)}$, and if $\gamma \notin \Mod(\Sigma_{g,n})$, then $\gamma T_a \gamma^{-1} = T_{\gamma(a)}^{-1}$. In particular, we can identify whether or not an automorphism of $\Mod(\Sigma_{g,n})$ is inner by simply observing whether a Dehn twist is sent to a Dehn twist, or the inverse of a Dehn twist.

In preparation for the next lemma, let $\vC:\Mod(\Sigma_g^1) \to \Mod(\Sigma_{g,1})$ be the capping homomorphism. For each isotopy class $a$ of a simple closed curve on $\Sigma_g^1$, let $\hat a$ be the isotopy class that is the image of $a$ under the inclusion $\Sigma_g^1 \hookrightarrow \Sigma_{g,1}$. Note that every isotopy class of a simple closed curve on $\Sigma_{g,1}$ is of the form $\hat a$ for some isotopy class $a$ on $\Sigma_g^1$.

\begin{lem} \label{lem:modaut-inner}
Let $g\geq 2$, and let $\varphi\in \Aut(\Mod(\Sigma_g^1))$ be such that $\varphi(T_d) = T_d$, where $d$ is the isotopy class of the boundary curve. Then $\varphi$ is an inner automorphism.
\end{lem}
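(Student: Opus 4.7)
The plan is to descend $\varphi$ through the capping central extension, use Ivanov's theorem to represent the descended automorphism by conjugation, rule out the orientation-reversing case using the chain relation, and finally apply Lemma \ref{lem:extension-automorphism} together with the vanishing of $\Hom(\Mod(\Sigma_g^1),\ZZ)$ to conclude.

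Since $\varphi(T_d)=T_d$, $\varphi$ preserves $\ker\vC=\langle T_d\rangle$ and so descends to $\bar\varphi\in\Aut(\Mod(\Sigma_{g,1}))$. Ivanov's theorem (applicable since $g\geq 2$) yields a unique $\gamma\in\Mod^{\pm}(\Sigma_{g,1})$ with $\bar\varphi(f)=\gamma f\gamma^{-1}$ for all $f\in\Mod(\Sigma_{g,1})$; the crux is to show $\gamma\in\Mod(\Sigma_{g,1})$. Suppose otherwise. Fix a $2g$-chain $(a_1,\ldots,a_{2g})$ on $\Sigma_g^1$, so $(T_{a_1}\cdots T_{a_{2g}})^{4g+2}=T_d$. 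Each $\gamma(\hat a_i)$ lifts uniquely to an isotopy class $b_i$ on $\Sigma_g^1$, and because $\gamma$ preserves geometric intersection numbers, $(b_1,\ldots,b_{2g})$ is again a $2g$-chain. By Ivanov, in the orientation-reversing case $\bar\varphi(T_{\hat a_i})=T_{\hat b_i}^{-1}$, so $\varphi(T_{a_i})=T_{b_i}^{-1}T_d^{m_i}$ for some $m_i\in\ZZ$. Applying $\varphi$ to the chain relation and using centrality of $T_d$ yields
\[
T_d=\varphi(T_d)=\bigl(T_{b_1}^{-1}\cdots T_{b_{2g}}^{-1}\bigr)^{4g+2}\cdot T_d^{(4g+2)M},\qquad M=\sum_{i=1}^{2g} m_i.
\]
The decisive step is that reversing a $2g$-chain yields another $2g$-chain with the same bounded subsurface, so applying the chain relation to $(b_{2g},\ldots,b_1)$ and taking inverses gives $(T_{b_1}^{-1}\cdots T_{b_{2g}}^{-1})^{4g+2}=T_d^{-1}$. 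Substituting forces $(2g+1)M=1$, which has no integer solution when $g\geq 1$---a contradiction.

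With $\gamma$ now known to be orientation-preserving, surjectivity of $\vC$ produces a lift $\tilde\gamma\in\Mod(\Sigma_g^1)$, and $\psi:=\mathrm{conj}_{\tilde\gamma}$ is an inner automorphism satisfying $\psi(T_d)=T_d$ (by centrality) and $\bar\psi=\bar\varphi$. Lemma \ref{lem:extension-automorphism}, applied to the capping extension and the pair $(\varphi,\psi)$, makes $\eta(x):=\varphi(x)\psi(x)^{-1}$ a homomorphism $\Mod(\Sigma_g^1)\to\langle T_d\rangle\cong\ZZ$. Since for $g\geq 2$ the abelianization of $\Mod(\Sigma_g^1)$ is finite (trivial for $g\geq 3$ by Powell, and $\ZZ/10$ for $g=2$), $\Hom(\Mod(\Sigma_g^1),\ZZ)=0$, so $\eta$ is trivial and $\varphi=\psi$ is inner. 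The principal obstacle is ruling out orientation-reversing $\gamma$; the decisive trick is the symmetry of the chain condition, which lets the constraint $\varphi(T_d)=T_d$ collide with two incompatible incarnations of the chain relation.
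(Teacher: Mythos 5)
Your proof is correct and follows essentially the same route as the paper: descend through the capping extension, apply Ivanov's theorem, rule out the orientation-reversing case via the chain relation $(T_{a_1}\cdots T_{a_{2g}})^{4g+2}=T_d$, and finish with Lemma \ref{lem:extension-automorphism} and the triviality of homomorphisms $\Mod(\Sigma_g^1)\to\ZZ$. The only differences are cosmetic: you handle the orientation-reversing case purely by contradiction (making explicit the chain-reversal argument that $(T_{b_1}^{-1}\cdots T_{b_{2g}}^{-1})^{4g+2}=T_d^{-1}$, which the paper uses implicitly when it writes $T_d^{\epsilon}$), whereas the paper first equalizes the twist exponents $s_i$ via the braid relation and extracts both $s=0$ and $\epsilon=1$ from a single equation.
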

\begin{proof}
Since $\varphi(T_d) = T_d$, we have an induced automorphism $\ol{\varphi} \in \Aut(\Mod(\Sigma_{g,1}))$. Then by \cite[Theorem 2]{Ivanov97}, there exists $\epsilon \in \{\pm 1\}$ so that for all isotopy classes of simple closed curves $\hat a$ on $\Sigma_{g,1}$, $\ol{\varphi}(T_{\hat a}) = T_{\hat b}^{\epsilon}$ where $\hat{b}$ is the image of $\hat{a}$ under an appropriately chosen element of $\Mod^{\pm}(\Sigma_{g,1})$. We will first show that $\epsilon = 1$, with the aim of concluding that $\ol{\varphi}$ is an inner automorphism.

Let $\{\hat{a}_1,\ldots,\hat{a}_{2g}\}$ be a $2g$-chain and suppose that for each $i \in \{1,\ldots,2g\}$, $\ol{\varphi}(T_{\hat{a}_i}) = T_{\hat{b}_i}^\epsilon$. Then $\varphi(T_{a_i}) = T_d^{s_i}T_{b_i}^\epsilon$ for some $s_i \in \ZZ$. Note $\{b_1,\ldots,b_{2g}\}$ is a $2g$-chain. In particular, for $i < 2g$, we have $i(b_i,b_{i+1}) = 1$ so 
\begin{align*}
1 &= \varphi(T_{a_i}T_{a_{i+1}}T_{a_i}T_{a_{i+1}}^{-1}T_{a_i}^{-1}T_{a_{i+1}}^{-1}) \\
&= T_d^{s_i - s_{i+1}}T_{b_i}^\epsilon T_{b_{i+1}}^\epsilon T_{b_i}^\epsilon T_{b_{i+1}}^{-\epsilon}T_{b_i}^{-\epsilon}T_{b_{i+1}}^{-\epsilon} \\
&= T_d^{s_i - s_{i+1}}.
\end{align*}
Therefore $s_i = s_{i+1}$ for all $i < 2g$. Let $s = s_i$. The chain relation gives
\begin{align*}
T_d   = \varphi(T_d) &= \varphi((T_{a_1}\cdots T_{a_{2g}})^{4g+2}) \\
&= T_d^{s(2g)(4g+2)} (T_{b_1}^\epsilon \cdots T_{b_{2g}}^\epsilon)^{4g+2} \\
&= T_d^{s(2g)(4g+2)}T_d^\epsilon.
\end{align*}
Therefore $ 1 = 2gs(4g+2)+\epsilon$. Since $g\geq 2$, we must have $s = 0$ and $\epsilon = 1$. Therefore $\ol{\varphi}$ is an inner automorphism.

Let $\nu \in \Mod(\Sigma_{g,1})$ be such that $\ol{\varphi}(f) = \nu f \nu^{-1}$ for all $f \in \Mod(\Sigma_{g,1})$, and choose $\wt \nu \in \vC^{-1}(\nu)$. Let $\theta \in \Aut(\Mod(\Sigma_g^1))$ be the inner automorphism given by conjugation by $\wt \nu$, that is $\theta(f) = \wt \nu f \wt \nu^{-1}$ for all $f \in \Mod(\Sigma_g^1)$. Then $\theta(T_d) = T_d$ and $\ol{\theta} = \ol{\varphi}$. Now, every homomorphism $\Mod(\Sigma_g^1) \to \ZZ$ is trivial \cite[Theorem 5.2]{FM12}. Therefore by Lemma \ref{lem:extension-automorphism}, $\varphi(f)\theta(f)^{-1} = 1$ for all $f \in \Mod(\Sigma_g^1)$. We may now conclude $\varphi(f) = \theta(f)$, and so $\varphi(f)$ is an inner automorphism.
\end{proof}

\section{Cofinality of boundary Dehn twists}\label{sec:cofinal}

The goal of this section is to prove that in the mapping class group of $\Sigma_g^1$, an orientable surface of genus $g$ with 1 boundary component, the Dehn twist about a curve isotopic to the boundary component is $<$-cofinal for {\it every} left ordering on $\Mod(\Sigma_g^1)$ (Theorem \ref{thm:cofinal-boundary-twist}).  This result will imply that Theorem \ref{main theorem} applies to all the actions of $\Mod(\Sigma_g^1)$ on $\mathbb{R}$, up to conjugation (Proposition \ref{conj to shift}).

We begin with a general lemma concerning left-orderable groups.

\begin{lem} \label{lem:generated-by-roots}
Let $G$ be a left-orderable group, and $z \in G$ a central element. Suppose there is a generating set $\{g_i\}_{i \in I}$ such that for each $i \in I$, there exist $n_i, m_i \in \ZZ \setminus \{0\}$ such that $g_i^{n_i} = z^{m_i}$. Then $z$ is $<$-cofinal for every left ordering $<$ of $G$.
\end{lem}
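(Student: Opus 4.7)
The plan is to show that the set $B = \{h \in G : \exists k \in \ZZ, z^{-k} < h < z^k\}$ of elements that are bounded by powers of $z$ is all of $G$. Since $B$ clearly contains the identity and is closed under inverses (just invert the two sides of $z^{-k} < h < z^k$), the strategy reduces to verifying (i) that $B$ is closed under products, and (ii) that each generator $g_i$ lies in $B$. Once these are done, $B$ is a subgroup of $G$ containing the generating set, hence $B = G$, which is exactly cofinality of $z$.

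Before anything else, I would handle the trivial reductions. If $z = \id$ then $g_i^{n_i} = \id$ with $n_i \neq 0$ forces $g_i = \id$ in any left-orderable group, so $G = \{\id\}$ and there is nothing to prove. Otherwise, for a fixed left ordering $<$ of $G$, by replacing $z$ with $z^{-1}$ if necessary (which does not affect the hypothesis $g_i^{n_i} = z^{m_i}$, only the signs of the $m_i$), I may assume $z > \id$.

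For step (ii), I would argue that each $g_i \in B$ directly from $g_i^{n_i} = z^{m_i}$. Replacing $g_i$ by $g_i^{-1}$ if necessary, assume $n_i > 0$. If $g_i > \id$, then successive left multiplications give $\id < g_i < g_i^2 < \cdots < g_i^{n_i} = z^{m_i}$, so $z^{-1} < g_i < z^{|m_i|}$ (noting $m_i > 0$ since $z^{m_i} > \id$). The case $g_i < \id$ is symmetric, and $g_i = \id$ is immediate.

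Step (i) is the main step and the one where centrality of $z$ is essential (left orderings are not in general bi-invariant, so naive right-multiplications are forbidden). Given $z^{-k_j} < h_j < z^{k_j}$ for $j = 1, 2$, I would derive $h_1 h_2 < z^{k_1+k_2}$ by the chain
\[
h_1 h_2 \;<\; h_1 z^{k_2} \;=\; z^{k_2} h_1 \;<\; z^{k_2} z^{k_1} \;=\; z^{k_1+k_2},
\]
where the first inequality is left multiplication of $h_2 < z^{k_2}$ by $h_1$, the equality uses that $z$ (hence $z^{k_2}$) is central, and the last inequality is left multiplication of $h_1 < z^{k_1}$ by $z^{k_2}$. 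The symmetric chain yields $z^{-(k_1+k_2)} < h_1 h_2$, so $h_1 h_2 \in B$. This closes the subgroup argument and finishes the proof.
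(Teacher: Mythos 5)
Your proposal is correct and follows essentially the same route as the paper: both define the set of elements bounded between powers of $z$, verify it is a subgroup (with centrality of $z$ doing the work in the inverse and product steps), and then place each generator $g_i$ in that set using the relation $g_i^{n_i} = z^{m_i}$ after normalizing signs. The only cosmetic differences are that you dispose of the case $z = \id$ separately and normalize each generator individually rather than all at once, neither of which changes the argument.
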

\begin{proof}
Consider the set $H = \{g \in G \mid \exists n \in \ZZ \text{ such that } z^{-n} < g < z^n \}$. It suffices to show $H = G$. We first show that $H$ is a subgroup. Note $z \neq 1$ since some power of $z$ is a power of every generator, and $G$ is torsion free. Therefore $z < 1 < z^{-1}$ or $z^{-1} < 1 < z$, so $1 \in H$. Next, if $z^{-n} < g < z^n$ for some $g \in G$ then $z^n < g^{-1} < z^{-n}$. Finally, if $z^{-n} < g < z^n$ and $z^{-m} < h < z^m$, then $gh > gz^{-m} = z^{-m}g > z^{-m}z^{-n} = z^{-(m+n)}$ and similarly $gh < z^{m+n}$. Thus $H$ is a subgroup of $G$.

By possibly replacing $z$ and each $g_i$ with its inverse, we may assume $z > 1$ and $g_i > 1$ for all $i \in I$, and that $n_i, m_i > 0$. Then
\[
z^{-m_i-1} < 1 < g_i < g_i^{n_i} = z^{m_i} < z^{m_i+1}.
\]
Therefore $H$ contains a generating set for $G$, so that $H = G$. 
\end{proof}

Our goal now is to apply Lemma \ref{lem:generated-by-roots} to the mapping class group of a genus $g> 0$ surface with 1 boundary component, $\Sigma_g^1$, proving Theorem \ref{thm:cofinal-boundary-twist}.

%\begin{thm} \label{thm:cofinal-boundary-twist}
%Let $T_d$ be the Dehn twist about a curve isotopic to the boundary of $\Sigma_g^1$. Then $T_d$ is cofinal in every left ordering of $\Mod(\Sigma_g^1)$.
%\end{thm}

\begin{proof}[Proof of Theorem \ref{thm:cofinal-boundary-twist}]
Let $(a_1,\ldots,a_{2g})$ be a $2g$-chain on $\Sigma_g^1$. Let $X = T_{a_1}T_{a_2}\cdots T_{a_{2g}}$ and $Y = T_{a_1}^2T_{a_2}\cdots T_{a_{2g}}$. Then $X^{4g+2} = Y^{4g} = T_d$ \cite[Section 4.4.1]{FM12}. Since $T_d$ is central in $\Mod(\Sigma_g^1)$, all conjugates of $X$ and $Y$ are roots of $T_d$. Note that $YX^{-1} = T_{a_1}$ and $a_1$ is a non-separating simple closed curve. All Dehn twists about non-separating simple closed curves are conjugate \cite[Section 1.3.1 and Fact 3.8]{FM12} and $\Mod(\Sigma_g^1)$ is generated by Dehn twists about non-separating simple closed curves \cite[Chapter 4]{FM12}. Therefore $\Mod(\Sigma_g^1)$ is generated by conjugates of $X$ and $Y$. The proof concludes by applying Lemma \ref{lem:generated-by-roots}.
\end{proof}

By considering $2g+1$-chains on $\Sigma_{g}^2$ and applying the chain relations as in the above proof, one obtains the result that for surfaces $\Sigma_g^2$, the product of the Dehn twists about curves isotopic to the boundary components is cofinal and central in every left ordering of $\Mod(\Sigma_g^2)$. We conjecture something stronger is true.

\begin{conj}
Let $g \geq 2$ and let $b_1,\ldots,b_n$ be curves isotopic to the boundary components of $\Sigma_g^n$. Any element of the form $\Pi_{i=1}^n T_{b_i}^{k_i}$ for any positive exponents $k_1,\ldots,k_n$ is cofinal in every left ordering of $\Mod(\Sigma_g^n)$.
\end{conj}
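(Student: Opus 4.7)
The plan is to apply Lemma \ref{lem:generated-by-roots} to $z = \prod_{i=1}^n T_{b_i}^{k_i}$, which is central in $\Mod(\Sigma_g^n)$ since each $T_{b_i}$ is. The task reduces to exhibiting a generating set $\{g_j\}$ of $\Mod(\Sigma_g^n)$ with $g_j^{N_j} = z^{M_j}$ for some nonzero integers $N_j, M_j$.

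The first and hardest step is producing such roots. In the case $n = 2$ with $k_1 = k_2 = k$, the $(2g+1)$-chain relation filling $\Sigma_g^2$ yields $X^{2g+2} = T_{b_1}T_{b_2}$, hence $X^{k(2g+2)} = z$, so $X$ is a root of $z$. For unequal exponents, however, and for $n \geq 3$, standard chain relations do not suffice: a $2g$-chain supported on a proper subsurface yields a twist $T_e$ about a separating (but not boundary-parallel) curve, which is not central and certainly not a power of $z$. One would therefore need to construct words $X$ supported on the full surface whose $N$-th power equals $z^M$ for an arbitrary positive direction $(k_1, \ldots, k_n)$, perhaps by combining generalised chain relations with lantern relations on embedded $\Sigma_0^4$-subsurfaces to independently control each $T_{b_i}$-exponent, then passing to powers so the combined exponents become a common multiple of the $k_i$.

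Following the template of the proof of Theorem \ref{thm:cofinal-boundary-twist}, one would then arrange for two such roots $X, Y$ of powers of $z$ with $YX^{-1} = T_a$ for some non-separating simple closed curve $a$. Since $z$ is central, the relations $X^N = z^M$ and $Y^{N'} = z^{M'}$ pass to all conjugates of $X$ and $Y$, and since $\Mod(\Sigma_g^n)$ is generated for $g \geq 1$ by conjugates of any single Dehn twist about a non-separating simple closed curve, these conjugates furnish a generating set to which Lemma \ref{lem:generated-by-roots} applies, delivering cofinality of $z$.

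The main obstacle is the first step: constructing elements whose powers equal powers of the specific central element $z$, for every choice of positive exponents, as $n$ and the $k_i$ vary. An alternative route that bypasses this construction is dynamical. One would aim to show that every left ordering of $\Mod(\Sigma_g^n)$ restricts to the centre $\ZZ^n = \langle T_{b_1}, \ldots, T_{b_n}\rangle$ in such a way that the Hahn-flag dominant direction lies in the closed positive octant. This is precisely what is needed for every vector with strictly positive coordinates to escape the infinitesimal subgroup and thus be cofinal, and a rigidity theorem in the spirit of Mann--Wolff, paired with the natural connection between the $n$ boundary fractional Dehn twist coefficients and the dynamics of the induced action on $\RR$, would be the natural vehicle for establishing such a constraint.
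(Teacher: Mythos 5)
First, a point of orientation: the statement you are proving is stated in the paper as a \emph{conjecture} --- the paper gives no proof of it, and the discussion preceding it only establishes the special case $n=2$ with equal exponents, via the $(2g+1)$-chain relation on $\Sigma_g^2$. So the only question is whether your argument settles it outright, and it does not; you essentially say so yourself. Your algebraic route correctly reduces to Lemma \ref{lem:generated-by-roots}, but that lemma needs a generating set of $\Mod(\Sigma_g^n)$ each of whose elements has a nonzero power equal to a nonzero power of the \emph{same} central element $z=\prod_{i=1}^n T_{b_i}^{k_i}$, and no known relation supplies this. The $(2g+1)$-chain relation gives $X^{2g+2}=T_{b_1}T_{b_2}$, so powers of $X$ produce only exponent vectors on the diagonal ray $(M,M)$ in the center $\ZZ^n$, never on the ray $(mk_1,mk_2)$ when $k_1\neq k_2$; chain relations on proper subsurfaces yield twists about separating, non-boundary-parallel curves, which are not central; and the lantern relation writes $T_{b_1}T_{b_2}T_{b_3}T_{b_4}$ as a product of three distinct twists, not as a proper power of anything. ``Passing to powers'' cannot repair this: if $X^N=z_1$ and $Y^{N'}=z_2$ with $z_1,z_2$ central, then powers of $X$ and $Y$ are powers of a common $z$ only if $z_1$ and $z_2$ lie on a single ray in $\ZZ^n$, which is exactly what fails for unequal exponent vectors. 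Constructing, for \emph{every} positive ray $(k_1,\ldots,k_n)$, enough roots of powers of $z$ to generate the group is the entire content of the conjecture, and your proposal contains no such construction. (A smaller issue: the generation fact you invoke --- that conjugates of a single non-separating twist generate --- is cited in the paper only for $\Sigma_g^1$; for $n\geq 2$ it requires a separate argument, since the individual boundary twists are not obviously in the normal closure of a non-separating twist.)

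The dynamical alternative fails for two independent reasons. First, cofinality of a central element in a left-ordered group $(G,<)$ is not detectable from the restriction of $<$ to the center: $z$ can be the dominant direction of the restricted ordering of $\ZZ^n$ and still lie in a proper $<$-convex subgroup of $G$, hence fail to be cofinal in $G$. The paper's own proof of Theorem \ref{conj to shift} turns on precisely this failure mode --- a fixed point of $\rho(T_d)$ yields an ordering in which $T_d$ sits inside a proper convex subgroup --- so your proposed reduction to a ``positive octant'' condition on the center does not imply the conjecture. Second, the rigidity input you hope for does not exist for these surfaces: Section \ref{limitations}, via Proposition \ref{counterexample fixed points}, shows that when $b>1$ the translation numbers of boundary twists genuinely depend on the action (there one has $c(T_{C'},C)=0$ while $c(T_{C'},C')=1$), which is exactly why Theorem \ref{conj to shift intro version} fails for $b>1$. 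Any dynamical proof of the conjecture must work around these counterexamples, whereas your sketch presupposes a Mann--Wolff-type statement that they rule out in its natural form.
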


%\tg{If we can figure it out, we change that to product of all boundary Dehn twists.}

\section{Fractional Dehn twist coefficients and actions on \texorpdfstring{$\mathbb{R}$}{R}}
\label{FDTC and orders}

\subsection{Fractional Dehn twist coefficients}
\label{FDTC}

Recall that if $\Sigma_{g,n}^b$ is a hyperbolic surface with $b>0$, there is a ``standard action" of $\Mod(\Sigma_{g,n}^b)$ on $\mathbb{R}$ that is constructed as follows.

First, we construct the universal cover $p: \widetilde{\Sigma_{g,n}^b} \rightarrow \Sigma_{g,n}^b$, and note that we can think of $\widetilde{\Sigma_{g,n}^b}$ as a closed subset of $\mathbb{H}^2$.   Fix a point $x_0 \in \partial \Sigma_{g,n}^b$, say in a component $C$ of the boundary, and a point $\tilde{x_0} \in \widetilde{C} \subset \partial \widetilde{\Sigma_{g,n}^b}$ with $p(\tilde{x_0}) = x_0$.  Now for each $h \in \Mod(\Sigma_{g,n}^b)$, there is a unique lift of $h$ satisfying $h(\tilde{x_0}) = \tilde{x_0}$ yielding an action of $\Mod(\Sigma_{g,n}^b)$ on $\widetilde{\Sigma_{g,n}^b}$ fixing $\tilde{x_0}$ and thus fixing $\widetilde{C}$.

 Now we can identify $\partial \widetilde{\Sigma_{g,n}^b} \setminus \ell$, where $\ell \subset \overline{\mathbb{H}^2}$ is the closure of $\widetilde{C}$, with the interval $(0,\pi)$, and thus with $\mathbb{R}$, by identifying each point $y$ on the boundary with the unique geodesic from $x_0$ to $y$.  Then observe that the action of $\Mod(\Sigma_{g,n}^b)$ extends to an action on $\partial \widetilde{\Sigma_{g,n}^b}$ by orientation-preserving homeomorphisms, which is homeomorphic to $\mathbb{R}$.  We orient the boundary and parameterise it so that the action of the boundary Dehn twist $T_C$ satisfies $T_C(x) = x+1$ for all $x \in \mathbb{R}$.  This defines a representation 
 \[ \rho_{s,C}: \Mod(\Sigma_{g,n}^b) \rightarrow \mathrm{H}\widetilde{\mathrm{ome}}\mathrm{o}_+(S^1)
 \]
which we call the \emph{standard representation with respect to $C$}.  The \emph{fractional Dehn twist coefficient} of $h \in \Mod(\Sigma_{g,n}^b)$ can be defined as
\[ c(h, C) = \tau^D(\rho_{s,C}(h)).
\]
While this is not the usual definition of the fractional Dehn twist coefficient, that this is equivalent to it appears in \cite[Theorem 4.16]{IK17}, and for the special case of $\Mod(\Sigma_{0,n}^1)$ (i.e. for the braid groups) in \cite{Malyutin04}.  When $b=1$, we will simplify our notation and use $ \rho_{s}$ to denote the standard representation, and $ c(h)$ to denote the fractional Dehn twist coefficient.

\subsection{Actions of \texorpdfstring{$\Mod(\Sigma_{g,n}^b)$}{Sgn} on \texorpdfstring{$\mathbb{R}$}{R}}

In this section we prove Theorem \ref{conj to shift}, which is Theorem \ref{conj to shift intro version} from the introduction, from which Theorem \ref{fdtc from orders} follows.  We begin with a preparatory lemma.

\begin{lem}\label{translation same up to isom}
    Suppose $<_1$ and $<_2$ are left orderings of a group $G$ such that there exists a central element $z \in G$ that is $<_i$-cofinal for both $i = 1,2$. Suppose further that $[f_{<_1}] = [f_{<_2}] \in H^2_b(G/\langle z \rangle ; \ZZ)$. Then there exists an automorphism $\phi$ of $G$ satisfying $\phi(z) = z$ and $\tau_{<_1}^A(g) = \tau_{<_2}^A(\phi(g))$ for all $g \in G$.
\end{lem}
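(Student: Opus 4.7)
Set $H := G/\langle z \rangle$ and translate the problem into the language of circular orderings on $H$ developed in Section \ref{lifts and quotients}. Each left ordering $<_i$ produces a circular ordering $f_{<_i}$ on $H$, together with a natural isomorphism $\iota_i : G \to \widetilde{H}_{f_{<_i}}$ of central extensions sending $z$ to the canonical positive cofinal central element $z_{f_{<_i}} = (id,1)$ and preserving the orderings. Concretely, $\iota_i(g) = (g\langle z \rangle, [g]_{<_i})$, and since the floor of $(h,n) \in \widetilde{H}_{f_{<_i}}$ is just $n$, one has $\tau^A_{<_i}(g) = \tau^A_{f_{<_i}}(\iota_i(g))$. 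It therefore suffices to construct a group isomorphism $\phi: \widetilde{H}_{f_{<_1}} \to \widetilde{H}_{f_{<_2}}$ fixing the canonical central elements, and take $\phi := \iota_2^{-1} \circ \phi \circ \iota_1$.

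\textbf{The key construction.} The hypothesis $[f_{<_1}] = [f_{<_2}] \in H^2_b(H;\mathbb{Z})$ provides a bounded 1-cochain $\beta: H \to \mathbb{Z}$ with $f_{<_2} - f_{<_1} = \delta\beta$, which we normalise so that $\beta(id) = 0$. Define
\[
\phi: \widetilde{H}_{f_{<_1}} \to \widetilde{H}_{f_{<_2}}, \qquad (h,n) \mapsto (h, n - \beta(h)).
\]
The coboundary identity $\delta\beta = f_{<_2} - f_{<_1}$ is precisely what is required for $\phi$ to be a group homomorphism; it is clearly bijective with inverse $(h,n)\mapsto(h,n+\beta(h))$; and $\beta(id)=0$ makes it carry $z_{f_{<_1}}$ to $z_{f_{<_2}}$. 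Pulling back through $\iota_1, \iota_2$ produces the required $\widetilde{\phi} \in \Aut(G)$ with $\widetilde{\phi}(z) = z$.

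\textbf{Matching translation numbers.} For $g \in G$ with $\iota_1(g) = (h,n)$, we have $\iota_2(\widetilde{\phi}(g)) = (h, n-\beta(h))$, so using $\tau^A_f(h,k) = \tau^A_f(h,0) + k$ we obtain
\[
\tau^A_{<_2}(\widetilde{\phi}(g)) - \tau^A_{<_1}(g) = \tau^A_{f_{<_2}}(h,0) - \tau^A_{f_{<_1}}(h,0) - \beta(h).
\]
From the formula $(h,0)^n = (h^n, \sum_{j=1}^{n-1} f(h^j, h))$, the first two terms combine to
\[
\lim_{n\to\infty}\frac{1}{n}\sum_{j=1}^{n-1} \delta\beta(h^j, h) = \lim_{n\to\infty}\frac{1}{n}\sum_{j=1}^{n-1}\bigl(\beta(h^j) + \beta(h) - \beta(h^{j+1})\bigr),
\]
which telescopes to $\beta(h) - \lim_n \beta(h^n)/n$. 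Here the boundedness of $\beta$ is used crucially: it forces $\beta(h^n)/n \to 0$, so this expression collapses to $\beta(h)$, exactly cancelling the remaining $-\beta(h)$ above.

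\textbf{Main obstacle.} The real content lies in the use of bounded cohomology: if one only had $[f_{<_1}] = [f_{<_2}]$ in ordinary $H^2(H;\mathbb{Z})$, the primitive $\beta$ could be unbounded and the telescoping above would fail to kill the $\beta(h^n)/n$ term. The rest is a careful bookkeeping exercise, the main pitfalls being sign conventions in $\delta\beta$ and the correct direction of the natural isomorphisms $\iota_i$.
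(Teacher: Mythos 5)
Your proof is correct and is essentially the paper's own argument in different packaging: under the identifications $\iota_i$, your automorphism $\iota_2^{-1}\circ\phi\circ\iota_1$ is exactly the map $g\mapsto \{g\}_2\, z^{[g]_{<_1} + d(g\langle z\rangle)}$ that the paper constructs directly (with $d=-\beta$), and both arguments hinge on the same point, namely that boundedness of the primitive forces $\beta(h^n)/n \to 0$. The only differences are cosmetic: you verify the homomorphism property inside the lifted group $\widetilde{H}_{f_{<_i}}$ rather than via coset-representative bookkeeping in $G$, and you obtain the equality of translation numbers by a telescoping sum rather than by observing directly that the floors $[\phi(g^n)]_{<_2}$ and $[g^n]_{<_1}$ differ by the bounded quantity $d(g^n\langle z\rangle)$.
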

\begin{proof}
    To ease notation, we will simply write $f_i = f_{<_i}$ and $[g]_{<_i} = [g]_i$ for $i = 1,2$. Let $\{g\}_i$ be the unique coset representative of $g\langle z \rangle$ so that $id \leq_i \{g\}_i <_i z$. Note that every $g \in G$ is uniquely written as $\{g\}_iz^{a_i}$, and in this case $a_i = [g]_i$. Recall that $\{g\}_i\{h\}_i = \{gh\}_iz^{f_i(g\langle z \rangle,h\langle z \rangle)}$, and so $[gh]_i = [g]_i + [h]_i + f_i(g\langle z \rangle,h\langle z \rangle)$.  

    Since $[f_1] = [f_2] \in H_b^2(G/\langle z \rangle ;\ZZ)$, there is a bounded function $d:G/\langle z \rangle \to \ZZ$ such that for all $g, h \in G$,
    \[
    f_{1}(g\langle z \rangle,h\langle z \rangle) - f_{2}(g\langle z \rangle,h\langle z \rangle) = d(g\langle z \rangle) - d(gh\langle z \rangle) + d(h \langle z \rangle).
    \]
    Define $\phi:G \to G$ by $\phi(g) = \{g\}_2z^{[g]_1 + d(g \langle z \rangle)}$. We have
    \begin{align*}
        \phi(g)\phi(h) &= \{g\}_2z^{[g]_1 + d(g \langle z \rangle)}\{h\}_2 z^{[h]_1 + d(h \langle z \rangle)} \\
        &= \{gh\}_2z^{[g]_1 + [h]_1 + d(g\langle z \rangle) + d(h \langle z \rangle) + f_2(g\langle z \rangle,h\langle z \rangle)} \\
        &= \{gh\}_2z^{[g]_1 + [h]_1 + d(gh\langle z \rangle) + f_1(g\langle z \rangle,h\langle z \rangle)} \\
        &= \{gh\}_2z^{[gh]_1 + d(gh\langle z \rangle)} \\
        &= \phi(gh).
    \end{align*}
    so $\phi$ is a homomorphism. An inverse is given by $\phi^{-1}(g) = \{g\}_1z^{[g]_2 - d(g\langle z \rangle)}$ so $\phi$ is an automorphism of $G$. We also check that
    \begin{align*}
        \tau_{<_2}^A(\phi(g))  - \tau_{<_1}^A(g) &= \lim_{n \to \infty} \frac{[\phi(g^n)]_2 - [g^n]_1}{n} \\
        &= \lim_{n \to \infty} \frac{[g^n]_1 + d(g^n \langle z \rangle) - [g^n]_1}{n} \\
        &= \lim_{n \to \infty} \frac{d(g^n\langle z \rangle)}{n}\\
        & = 0,
    \end{align*}
    where the last equality follows since $d$ is bounded. Finally, observe that $d(\langle z \rangle) = 0$, and for $i = 1,2$, $\{z\}_i = id$ and $[z]_i = 1$. Thus $\phi(z) = z$.
\end{proof}

For the statement and proof of the next theorem, recall that $T_d$ denotes the Dehn twist around a simple closed curve $d$ that is parallel to $\partial \Sigma_g^1$.

\begin{thm}
\label{main theorem}
Suppose that $g \geq 2$ and that $\rho_i: \Mod(\Sigma_g^1) \rightarrow \mathrm{H}\widetilde{\mathrm{ome}}\mathrm{o}_+(S^1)$ is an injective homomorphism satisfying $\rho_i(T_d)(x) = x+1$ for $i =1,2$ and for all $x \in \mathbb{R}$.   Then $ \tau^D(\rho_1(h)) = \tau^D(\rho_2(h))$ for all $h \in \Mod(\Sigma_g^1) $.
\end{thm}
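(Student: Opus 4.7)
The plan is to combine the lifting/quotient correspondence between left orderings of $\Mod(\Sigma_g^1)$ and circular orderings of $\Mod(\Sigma_{g,1})$ with the rigidity theorem of Mann and Wolff for actions of $\Mod(\Sigma_{g,1})$ on $S^1$, together with Lemma \ref{lem:modaut-inner} controlling the automorphism group of $\Mod(\Sigma_g^1)$.

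First, for each $i \in \{1,2\}$, I would apply Proposition \ref{rep to order} (with an arbitrarily chosen auxiliary ordering of $\Mod(\Sigma_g^1)$) to obtain a left ordering $<_i$ of $\Mod(\Sigma_g^1)$ for which $T_d$ is positive and cofinal, and which satisfies $\tau^A_{<_i}(h) = \tau^D(\rho_i(h))$ for all $h$. Quotienting by $\langle T_d \rangle$ produces a circular ordering $f_i$ on $\Mod(\Sigma_{g,1})$. Because $\rho_i(T_d)$ acts as translation by $1$, the composition $q \circ \rho_i$, where $q: \mathrm{H}\widetilde{\mathrm{ome}}\mathrm{o}_+(S^1) \to \mathrm{Homeo}_+(S^1)$ is the standard quotient, descends through the capping homomorphism to a representation $\bar{\rho}_i: \Mod(\Sigma_{g,1}) \to \mathrm{Homeo}_+(S^1)$, which is injective and has no global fixed point. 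A direct computation of the bounded Euler cocycle of $\bar{\rho}_i$ using the lifts $\rho_i(\{g\}_i)$ (whose value at $0$ lies in $[0,1)$ by construction of $<_i$, making them the normalised lifts appearing in the Euler-cocycle formula of Section \ref{orders and dynamics}) yields precisely $f_i$. Thus $eu(\bar{\rho}_i) = [f_i] \in H^2_b(\Mod(\Sigma_{g,1}); \mathbb{Z})$.

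Next, I would invoke the Mann--Wolff rigidity theorem to conclude that $\bar{\rho}_1$ and $\bar{\rho}_2$ are semiconjugate; orientation reversal is ruled out since both representations send $T_d$ to the same orientation-preserving translation $x \mapsto x+1$. As semiconjugate representations share a bounded Euler class, this gives $[f_1] = [f_2]$. Applying Lemma \ref{translation same up to isom} with $G = \Mod(\Sigma_g^1)$ and $z = T_d$ then produces an automorphism $\phi \in \Aut(\Mod(\Sigma_g^1))$ satisfying $\phi(T_d) = T_d$ and $\tau^A_{<_1}(h) = \tau^A_{<_2}(\phi(h))$ for all $h$.

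By Lemma \ref{lem:modaut-inner}, $\phi$ must be inner; writing $\phi(h) = khk^{-1}$ and applying the conjugation-invariance of $\tau^A$ from Proposition \ref{conjugacy invariance} (which transfers to $(\Mod(\Sigma_g^1), <_2)$ via the canonical isomorphism with the left-ordered lift of $(\Mod(\Sigma_{g,1}), f_2)$) yields $\tau^A_{<_2}(\phi(h)) = \tau^A_{<_2}(h)$. Chaining the equalities completes the proof:
\[
\tau^D(\rho_1(h)) = \tau^A_{<_1}(h) = \tau^A_{<_2}(\phi(h)) = \tau^A_{<_2}(h) = \tau^D(\rho_2(h)).
\]
The main technical obstacle is the Euler-cocycle computation: one must verify carefully that the coset representatives $\{g\}_i$ used to define $f_i$ coincide with the normalised lifts appearing in the Euler-cocycle formula for $\bar{\rho}_i$, so that the cocycle comes out to be exactly $f_i$ rather than merely cohomologous to it.
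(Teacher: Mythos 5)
Your proposal is correct and follows the same overall architecture as the paper's proof: Proposition \ref{rep to order} converts each $\rho_i$ into a left ordering $<_i$ with matching translation numbers, Mann--Wolff is used to conclude $[f_{<_1}] = [f_{<_2}] \in H^2_b(\Mod(\Sigma_{g,1});\ZZ)$, and then Lemma \ref{translation same up to isom}, Lemma \ref{lem:modaut-inner} and Proposition \ref{conjugacy invariance} finish exactly as you describe. The one place you genuinely deviate is the middle step: the paper applies Mann--Wolff to the \emph{dynamic realisations} of the circular orderings $f_{<_i}$ and then cites Proposition \ref{semiconj of orders}, whereas you apply it to the quotient actions $\bar\rho_i$ induced by $q\circ\rho_i$ and compute their bounded Euler cocycles directly. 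Your route is a legitimate and arguably cleaner shortcut, since the actions $\bar\rho_i$ are already in hand (they are nontrivial---indeed injective, as $\ker(q\circ\rho_i)=\langle T_d\rangle$---so Mann--Wolff applies), and the step you flag as the ``main technical obstacle'' is not actually an obstacle: taking the lifts $\rho_i(\{g\}_i)$, the relation $\{g\}_i\{h\}_i = \{gh\}_i T_d^{f_{<_i}(g\langle T_d\rangle, h\langle T_d\rangle)}$ together with the fact that lifts commute with integer translations gives that the cocycle is \emph{exactly} $f_{<_i}$, and since these lifts have translation parts in $[0,1]$ (hence bounded), they represent the bounded Euler class even when they fail to be the normalised lifts with value in $[0,1)$. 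One correction: your one-line reason for ruling out orientation reversal is garbled as stated, because $T_d$ maps to the identity in $\Mod(\Sigma_{g,1})$, so $\bar\rho_i(T_d)$ is the identity of $S^1$, not a translation; the correct form of your idea is that both $\bar\rho_i$ pull the extension $\mathrm{H}\widetilde{\mathrm{ome}}\mathrm{o}_+(S^1)\to\mathrm{Homeo}_+(S^1)$ back to the capping extension with distinguished generator $T_d$, so their integral Euler classes coincide, and that class is not $2$-torsion (restrict to the point-pushing subgroup $\pi_1(\Sigma_g)$, where the Euler number is $2-2g\neq 0$). The paper's own proof is entirely silent on this orientation point, so you are, if anything, more attentive to it than the source.
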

\begin{proof}
    Fix a left ordering $\prec$ of $\Mod(\Sigma_g^1)$.  Then, associated to each homomorphism $\rho_i$, there is an ordering $<_i$ defined as in Proposition \ref{rep to order} that  satisfies $\tau^D(\rho_i(h)) = \tau^A_{<_i}(h)$ for $i = 1, 2$ and for all $h \in  \Mod(\Sigma_g^1)$. By Theorem \ref{thm:cofinal-boundary-twist}, $T_d$ is $<_i$-cofinal for $i = 1,2$. Consider the dynamic realisations of the circular orderings $f_{<_i}$ on $\Mod(\Sigma_g^1)/\langle T_d \rangle = \Mod(\Sigma_{g,1})$. By \cite{MW}, the dynamic realisations are semiconjugate, so $[f_{<_1}] = [f_{<_2}] \in H^2_b(\Mod(\Sigma_{g,1};\ZZ)$ by Proposition \ref{semiconj of orders}. Therefore by Lemma \ref{translation same up to isom}, there is an automorphism $\phi$ of $\Mod(\Sigma_g^1)$ so that $\phi(T_d) = T_d$ and $\tau_{<_1}^A(h) = \tau_{<_2}^A(\phi(h))$ for all $h \in \Mod(\Sigma_g^1)$. By Lemma \ref{lem:modaut-inner}, $\phi$ is an inner automorphism. Thus by Proposition \ref{conjugacy invariance}, $\tau_{<_1}^A(h) = \tau_{<_2}^A(h)$, and the proof is complete.
\end{proof}

For the next proof, recall that a subgroup $C$ of a left-ordered group $(G, <)$ is called \emph{convex} if, whenever $c, d \in C$ and $g \in G$ then $c<g<d$ implies $g \in C$.  The next theorem is Theorem \ref{conj to shift intro version} from the introduction.

\begin{thm}
\label{conj to shift} Suppose that $g \geq 2$ and let $\rho:\Mod(\Sigma_g^1) \rightarrow \mathrm{Homeo}_+(\mathbb{R})$ be an injective homomorphism such that the action of $\Mod(\Sigma_g^1)$ on $\mathbb{R}$ is without global fixed points. Then, up to reversing orientation, $\rho$ is conjugate to a representation $\rho':\Mod(\Sigma_g^1) \rightarrow \mathrm{H}\widetilde{\mathrm{ome}}\mathrm{o}_+(S^1)$ such that $\rho'(T_d)(x) = x+1$ for all $x \in \mathbb{R}$ and $ c(h) = \tau^D(\rho'(h))$ for every $h \in \Mod(\Sigma_g^1)$.
\end{thm}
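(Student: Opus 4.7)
The plan is to invoke Theorem \ref{main theorem} after first normalising $\rho$ so that $\rho(T_d)$ becomes translation by one, using Theorem \ref{thm:cofinal-boundary-twist} to guarantee that $\rho(T_d)$ is fixed-point-free.

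First I would show that $\rho(T_d)$ has no fixed points in $\mathbb{R}$. Suppose toward a contradiction that $\rho(T_d)(x_0) = x_0$ for some $x_0 \in \mathbb{R}$. Fix any left ordering $\prec$ of $\Mod(\Sigma_g^1)$, and define a new left ordering $<$ by declaring $g < h$ iff $\rho(g)(x_0) < \rho(h)(x_0)$, or $\rho(g)(x_0) = \rho(h)(x_0)$ and $g \prec h$, exactly as in Proposition \ref{rep to order}. Then $T_d^k$ fixes $x_0$ for every $k \in \mathbb{Z}$, and since $\rho$ has no global fixed points there is some $h \in \Mod(\Sigma_g^1)$ with $\rho(h)(x_0) > x_0$ (after replacing $h$ by $h^{-1}$ if needed), so that $T_d^k < h$ for every $k$, contradicting Theorem \ref{thm:cofinal-boundary-twist}. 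Thus $\rho(T_d)$ is a fixed-point-free orientation-preserving homeomorphism of $\mathbb{R}$ and hence conjugate to one of the two unit translations. After reversing the orientation of $\mathbb{R}$ if necessary and then conjugating by an appropriate $\varphi \in \mathrm{Homeo}(\mathbb{R})$, the representation $\rho' := \varphi \rho \varphi^{-1}$ satisfies $\rho'(T_d)(x) = x+1$ for all $x \in \mathbb{R}$.

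Next, centrality of $T_d$ in $\Mod(\Sigma_g^1)$ forces each $\rho'(h)$ to commute with translation by one, i.e.\ $\rho'(h)(x+1) = \rho'(h)(x) + 1$, so $\rho'$ takes values in $\mathrm{H}\widetilde{\mathrm{ome}}\mathrm{o}_+(S^1)$. To finish, I would apply Theorem \ref{main theorem} to the pair $\rho_1 = \rho'$ and $\rho_2 = \rho_s$, where $\rho_s$ is the standard representation from Section \ref{FDTC}. Both homomorphisms are injective ($\rho'$ because $\rho$ is, and $\rho_s$ by the faithfulness of the standard action on the boundary of the universal cover for $g \geq 1$) and both send $T_d$ to $x \mapsto x+1$. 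Theorem \ref{main theorem} then yields $\tau^D(\rho'(h)) = \tau^D(\rho_s(h)) = c(h)$ for every $h \in \Mod(\Sigma_g^1)$, completing the proof.

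The principal obstacle is the first step, namely extracting the dynamical consequence from Theorem \ref{thm:cofinal-boundary-twist} that $\rho(T_d)$ cannot fix any point. Once that is in hand, the rest is a clean assembly: normalisation by conjugation, use of centrality to land in $\mathrm{H}\widetilde{\mathrm{ome}}\mathrm{o}_+(S^1)$, and comparison against $\rho_s$ via Theorem \ref{main theorem}.
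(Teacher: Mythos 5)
Your proposal is correct and follows essentially the same route as the paper's proof: use Theorem \ref{thm:cofinal-boundary-twist} to rule out a fixed point of $\rho(T_d)$ by building a left ordering from the orbit of the would-be fixed point, normalise by conjugation so that $\rho'(T_d)$ is translation by one, and then apply Theorem \ref{main theorem} against the standard representation $\rho_s$. The only differences are cosmetic refinements---you break ties with a fixed ordering $\prec$ rather than ordering the stabilizer arbitrarily, and you spell out the centrality and injectivity checks the paper leaves implicit.
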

\begin{proof}
Suppose that $\rho:\Mod(\Sigma_g^1) \rightarrow \mathrm{Homeo}_+(\mathbb{R})$ is a homomorphism for which the corresponding action on $\RR$ has no global fixed point and such that $\rho(T_d)$ is not conjugate to shift by $\pm 1$.  Then $\rho(T_d)$ must have a fixed point, say $x_0$.  By ordering the cosets of the stabilizer $\mathrm{Stab}_{\rho}(x_0)$ in $\Mod(\Sigma_g^1)$ according to the orbit of $x_0$, and ordering $\mathrm{Stab}_{\rho}(x_0)$ however we please, we obtain a contradiction to Theorem \ref{thm:cofinal-boundary-twist} since $T_d \in \mathrm{Stab}_{\rho}(x_0)$, which is a convex subgroup in the resulting ordering.  Therefore, after fixing an appropriate orientation of $\RR$ we may choose $\rho':\Mod(\Sigma_g^1) \rightarrow \mathrm{H}\widetilde{\mathrm{ome}}\mathrm{o}_+(S^1)$ satisfying $\rho'(T_d)(x) = x+1$ for all $x \in \mathbb{R}$.  Now by Theorem \ref{main theorem}, for every $h \in \Mod(\Sigma_g^1)$ we have $\tau^D(\rho'(h)) = \tau^D(\rho_s(h)) = c(h)$.
\end{proof}

In particular, this means that the fractional Dehn twist coefficient of any element of $\Mod(\Sigma_g^1)$ can be computed directly from an arbitrary left ordering of $\Mod(\Sigma_g^1)$ (See also \cite{IK17}, where this result appears for the special case of the braid groups equipped with the Dehornoy ordering). In particular, the proof of Theorem \ref{fdtc from orders} in the introduction now follows immediately from Theorem \ref{conj to shift} and Proposition \ref{order to rep}.
%\begin{cor} 
%\label{fdtc from orders}
%Suppose $g \geq 3$ and fix a left ordering $<$ of $\Mod(\Sigma_g^1)$ for which $T_d >id$.  Then 
%\[ c(h) = \lim_{n \to \infty} \frac{[h^n]_<}{n}
%\]
%for all $h \in \Mod(\Sigma_g^1)$.
%\end{cor}
%\begin{proof}[Proof of Theorem \ref{fdtc from orders}]
%This is immediate from Theorem \ref{conj to shift} and Proposition \ref{order to rep}.
%\end{proof}

\subsection{Estimating fractional Dehn twist coefficients using left orderings}

In light of Proposition \ref{fdtc from orders}, every left ordering of $\Mod(\Sigma_g^1)$ gives rise to some easy techniques for estimating fractional Dehn twists.

\begin{prop}
\label{estimation}
Suppose that $g \geq 2$ and fix a left ordering $<$ of $\Mod(\Sigma_g^1)$ for which $T_d >id$.  If $T_d^k \leq h^m < T_d^{\ell}$ then $\frac{k}{m} \leq c(h) \leq \frac{\ell}{m}$.
\end{prop}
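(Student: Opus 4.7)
The plan is to reduce to Theorem \ref{fdtc from orders}, which identifies $c(h)$ with $\lim_{n \to \infty} [h^n]_</n$. Since this limit exists, it agrees with the limit taken along any subsequence; I will work along the subsequence $n = mj$ as $j \to \infty$, and bound $[h^{mj}]_<$ from both sides.

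For the lower bound, I will extract from $T_d^k \leq h^m$ the inequality $[h^m]_< \geq k$, which is immediate from the definition of the floor $[\,\cdot\,]_<$. Iterating the superadditivity $[g^n]_< + [g^{n'}]_< \leq [g^{n+n'}]_<$ recorded in Section \ref{orders and dynamics}, I obtain $[h^{mj}]_< \geq j[h^m]_< \geq jk$ for every $j \geq 1$. Dividing by $mj$ and passing to the limit yields $c(h) \geq k/m$.

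For the upper bound, the essential tool is the centrality of $T_d$. Starting from $h^m < T_d^\ell$, I will show by induction on $j$ that $h^{mj} < T_d^{\ell j}$. The inductive step is
\[
h^{m(j+1)} = h^m \cdot h^{mj} < h^m \cdot T_d^{\ell j} = T_d^{\ell j} \cdot h^m < T_d^{\ell j} \cdot T_d^{\ell} = T_d^{\ell(j+1)},
\]
where the first and third inequalities use that left multiplication preserves $<$, and the middle equality uses centrality of $T_d$. From $h^{mj} < T_d^{\ell j}$ we deduce $[h^{mj}]_< < \ell j$, hence $[h^{mj}]_</(mj) < \ell/m$. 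Taking $j \to \infty$ gives $c(h) \leq \ell/m$.

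There is no real obstacle once Theorem \ref{fdtc from orders} is in hand; the proposition is essentially a calculation combining the definition of the floor with superadditivity on one side and a telescoping inductive estimate on the other. The one subtle point worth flagging is that in a general left-ordered group one cannot pass from $a < b$ to $a^j < b^j$, so centrality of $T_d$ is genuinely used in the upper bound.
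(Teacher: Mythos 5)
Your proof is correct and follows essentially the same route as the paper: both arguments use the centrality of $T_d$ to promote $T_d^k \leq h^m < T_d^{\ell}$ to the bounds $jk \leq [h^{mj}]_< < j\ell$, then divide by $mj$ and invoke the limit formula of Theorem \ref{fdtc from orders}. Your use of superadditivity for the lower bound is a cosmetic variant of the paper's direct powering of both inequalities (superadditivity itself rests on the same centrality trick), so there is no substantive difference.
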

\begin{proof}
Because $T_d$ is central, the inequality $T_d^k \leq h^n < T_d^{\ell}$ implies that $T_d^{nk} \leq h^{nm} < T_d^{n\ell}$ for all $n>0$.  Therefore $nk \leq [h^{nm}]_{<} <n \ell$, and so 
\[  \frac{k}{m} \leq \lim_{n \to \infty} \frac{[h^{nm}]_<}{nm} \leq \frac{\ell}{m},
\]
but the central term is clearly equal to $c(h)$.
\end{proof}

Aside from yielding quick estimates of fractional Dehn twist coefficients, the fact that the previous proposition holds for every left ordering of $\Mod(\Sigma_g^1)$ allows for a new methods of computing fractional Dehn twist coefficients.

\begin{cor}
\label{estimation cor}
Suppose that $g \geq 2$, and let $<_1, <_2$ be left orderings of $\Mod(\Sigma_g^1)$ for which $T_d >_i id$ for $i=1,2$.  If $h^n \geq_1 T_d^k$ and there exists $f$ such that $fh^nf^{-1} \leq_2 T_d^k$, then $c(h) = \frac{k}{n}$.
\end{cor}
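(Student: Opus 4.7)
The plan is to invoke Theorem \ref{fdtc from orders} separately on each of the two hypotheses, extracting a lower bound on $c(h)$ from the first inequality and an upper bound from the second. No ingredients beyond centrality of $T_d$ and conjugation invariance of $c$ should be needed.

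For the lower bound, centrality of $T_d$ in $\Mod(\Sigma_g^1)$ promotes $h^n \geq_1 T_d^k$ to $h^{nm} \geq_1 T_d^{km}$ for every positive integer $m$: indeed $h^n T_d^{-k} \geq_1 \id$, and centrality of $T_d$ lets us multiply this positive element by itself without reshuffling. Consequently $[h^{nm}]_{<_1} \geq km$, and Theorem \ref{fdtc from orders} gives
\[
c(h) \;=\; \lim_{m \to \infty} \frac{[h^{nm}]_{<_1}}{nm} \;\geq\; \frac{k}{n}.
\]

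For the matching upper bound, set $g = f h^n f^{-1}$. Because $c$ is the translation number of the standard representation, it is conjugation invariant and satisfies $c(h^n) = n\,c(h)$, so $c(g) = n\,c(h)$. Centrality of $T_d$ again upgrades $g \leq_2 T_d^k$ to $g^m \leq_2 T_d^{km}$ for all $m \geq 1$, whence $[g^m]_{<_2} \leq km$, and a second application of Theorem \ref{fdtc from orders} yields $c(g) \leq k$. Thus $c(h) \leq k/n$, and combining the two bounds forces $c(h) = k/n$.

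The only step that requires any real care is the conjugation move in the upper bound: since $<_1$ and $<_2$ are a priori unrelated, one cannot directly compare the floor functions $[\cdot]_{<_1}$ and $[\cdot]_{<_2}$, and the comparison must instead be routed through the fact that $c$ is an intrinsic invariant of the mapping class rather than of the chosen ordering. Once this is acknowledged, the corollary is essentially just Proposition \ref{estimation} applied twice.
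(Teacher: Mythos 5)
Your proof is correct and is essentially the paper's own argument: the paper's proof simply cites Proposition \ref{estimation} (applied once to $h^n \geq_1 T_d^k$ and once to $(fhf^{-1})^n \leq_2 T_d^k$) together with conjugation invariance (Proposition \ref{conjugacy invariance}), and your write-up just unpacks Proposition \ref{estimation} inline using centrality of $T_d$ and the limit formula of Theorem \ref{fdtc from orders}, routing the upper bound through conjugation invariance of $c$ exactly as the paper does.
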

\begin{proof} This is a direct consequence of Propositions \ref{estimation} and \ref{conjugacy invariance}.
\end{proof}

In particular, this corollary implies that if there exists a left ordering $<$ of $\Mod(\Sigma_g^1)$ and $g, h \in \Mod(\Sigma_g^1)$ and $n \in \mathbb{Z}$, $n>0$ such that $[h^n]_< \neq [gh^ng^{-1}]_<$, then we can quickly determine the fractional Dehn twist coefficient of $h$.  

For if $T_d^k \leq h^n <T_d^{k+1}$, then it follows that $T_d^{k-1} \leq gh^ng^{-1} < T_d^{k+2}$.  So if $[h^n]_< \neq [gh^ng^{-1}]_<$ then it must be that either $T_d^{k-1} \leq gh^ng^{-1} < T_d^{k}$ or $T_d^{k+1} \leq gh^ng^{-1} < T_d^{k+2}$.  In the former case, $c(h) = \frac{k}{n}$, and in the latter, $c(h) = \frac{k+1}{n}$.

%While not at all obvious from the definition, the fractional Dehn twist coefficient is always an element of $\QQ$ {\bf cite needed}.  Therefore the previous theorem provides a surprising restriction on the actions of $\Mod(\Sigma_g^1)$ and $\Mod(\Sigma_{g,1})$ on $\RR$ and $\QQ$ respectively.
%
%\begin{cor}
%Suppose that $g \geq 3$.
%
%\begin{enumerate}
%\item If $\rho:\Mod(\Sigma_g^1) \rightarrow \mathrm{H}\widetilde{\mathrm{ome}}\mathrm{o}_+(S^1)$ is an injective homomorphism such that $\rho(T_d)(x) = x+1$ for all $x \in \mathbb{R}$ and such that the action of $\Mod(\Sigma_g^1)$ on $\mathbb{R}$ is without global fixed points, then $\tau^D_{\rho}(h) \in \QQ$ for all $h \in \Mod(\Sigma_g^1)$.
%\item  If $\rho:\Mod(\Sigma_{g,1}) \rightarrow \mathrm{Homeo}_+(S^1)$ is an injective homomorphism, then $\mathrm{rot}^D_{\rho}(h) \in \QQ$ for all $h \in \Mod(\Sigma_{g,1})$.
% \end{enumerate}
%\end{cor}

\section{Surfaces with many boundary components, low genus and marked points}
\label{limitations}

%\tg{I changed all references to Theorem 17 to Theorem 2, just to make it clearer we're talking about failures of our main theorems.}

In this brief section, we provide examples that show Theorem \ref{conj to shift intro version} and its left-orderability counterpart Theorem \ref{fdtc from orders} cannot hold for any surface $\Sigma_{g,n}^b$ with $b>1$, nor for surfaces $\Sigma_g^1$ when $g < 2$.   Whether or not our results hold for $\Sigma_{g,n}^1$ when $n>0$ and $g>1$ remains open. 

Our primary tool for doing so is the following observation.

\begin{prop}
\label{counterexample fixed points}
Let $b>0$, fix a boundary component $C$ of $\Sigma_{g,n}^b$, and recall $\rho_{s,C} : \Mod(\Sigma_{g,n}^b) \rightarrow \mathrm{H}\widetilde{\mathrm{ome}}\mathrm{o}_+(S^1)$ denotes the standard action constructed as in Section \ref{FDTC}.  If $\alpha$ is a simple closed curve in $\Sigma_{g,n}^b$ that is not isotopic into $C$, then $\mathrm{Fix}(\rho_{s,C}(T_{\alpha})) \neq \emptyset$.
\end{prop}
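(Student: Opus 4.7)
\emph{Plan.} The strategy is to produce fixed points of $\rho_{s,C}(T_\alpha)$ directly from endpoints of lifts of $\alpha$ to the universal cover.

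First I would identify the relevant lift. Since $\alpha$ is a simple closed curve in the interior of $\Sigma_{g,n}^b$, the annular neighbourhood $A$ supporting $T_\alpha$ can be chosen disjoint from all boundary components. Consequently $\widetilde{x}_0 \in \widetilde{C}$ avoids $p^{-1}(A)$, and the unique lift $\widetilde{T_\alpha}$ fixing $\widetilde{x}_0$ is the canonical lift: it is the identity on the component $U_0$ of $\widetilde{\Sigma_{g,n}^b} \setminus p^{-1}(A)$ that contains $\widetilde{x}_0$, and across each strip in $p^{-1}(A)$ around a lift $\widetilde{\alpha}_i$ of $\alpha$, it acts (on the opposite component) by the hyperbolic deck transformation $\gamma_i$ generating the stabilizer of $\widetilde{\alpha}_i$.

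Next I would fix a lift $\widetilde{\alpha}$ of $\alpha$ in the strip adjacent to $U_0$, with stabilizer generated by a hyperbolic $\gamma \in \pi_1(\Sigma_{g,n}^b)$ whose axis has endpoints $p,q \in \partial \mathbb{H}^2 \subset \partial \widetilde{\Sigma_{g,n}^b}$. On the $U_0$-side of the strip around $\widetilde{\alpha}$, the canonical lift is the identity, which fixes $p$ and $q$; on the opposite side, the canonical lift equals $\gamma$, which also fixes the endpoints of its own axis. By continuity across the strip, $\widetilde{T_\alpha}(p) = p$ and $\widetilde{T_\alpha}(q) = q$.

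Finally, I would verify that $p,q$ actually lie in $\partial \widetilde{\Sigma_{g,n}^b} \setminus \ell$. The hypothesis that $\alpha$ is not isotopic into $C$ translates in the Fuchsian group to the fact that $\gamma$ and a hyperbolic element stabilizing $\widetilde{C}$ are not powers of a common element, so their axes share no endpoints on $\partial \mathbb{H}^2$. Therefore $\{p,q\} \cap \overline{\widetilde{C}} = \emptyset$, so $\{p,q\} \subset \partial \widetilde{\Sigma_{g,n}^b} \setminus \ell$, giving the desired fixed points of $\rho_{s,C}(T_\alpha)$ on $\mathbb{R}$. The main delicacy is the case where $\alpha$ is parallel to a boundary component $C' \neq C$: then $\gamma$ is the element stabilizing a lift of $C'$, but the same argument applies once we note that this $\gamma$ is still hyperbolic with axis endpoints distinct from those of $\widetilde{C}$.
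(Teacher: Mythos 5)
Your proof is correct, and it ultimately locates the same fixed points as the paper's proof, but by a genuinely different mechanism, so the comparison is worth recording. The paper's argument is softer: it fixes an infinite geodesic ray in $\Sigma_{g,n}^b$ starting at $x_0 \in C$ and never entering the twisting annulus $A$ (for instance a ray winding around one side of $A$), and notes that its lift starting at $\widetilde{x}_0$ stays in your component $U_0$, where the canonical lift of $T_\alpha$ is the identity; hence the endpoint of that lifted ray on $\partial \widetilde{\Sigma_{g,n}^b}$ is fixed. In that approach the check that the fixed point avoids $\ell$ is automatic: a geodesic ray issuing from the point $\widetilde{x}_0$ of the geodesic $\widetilde{C}$ can share an endpoint at infinity with $\widetilde{C}$ only if it runs along $\widetilde{C}$, so no group-theoretic input is needed, and the hypothesis that $\alpha$ is not isotopic into $C$ enters only through the existence of such a ray. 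You instead identify the fixed points explicitly as the endpoints $p,q$ of the invariant lift $\widetilde{\alpha}$, via the deck-transformation structure of the canonical lift, and you must then exclude $p,q \in \ell$ by the discreteness fact that two hyperbolic elements of a Fuchsian group whose axes share even one endpoint at infinity lie in a common cyclic subgroup. This costs more machinery but buys transparency: it exhibits the fixed points concretely and makes visible exactly where the hypothesis on $\alpha$ is used. Two small points to tighten: passing from ``$\alpha$ not isotopic into $C$'' to ``$\gamma$ and $\delta$ have no common root'' (where $\delta$ generates the stabilizer of $\widetilde{C}$) implicitly uses that essential simple closed curves and boundary curves are primitive in $\pi_1$, so that a common root would force $\gamma = \delta^{\pm 1}$ and hence an isotopy of $\alpha$ onto $C$; and your ``continuity across the strip'' is cleanest phrased by noting that the $U_0$-side boundary curve of the strip is $\langle\gamma\rangle$-invariant and at bounded distance from $\widetilde{\alpha}$, hence accumulates exactly at $p$ and $q$, so the continuous extension of $\widetilde{T_\alpha}$, being the identity on $U_0$, fixes both. (Also, if $\alpha$ bounds a disk or a once-marked disk there is no hyperbolic $\gamma$ at all, but then $T_\alpha$ is trivial in $\Mod(\Sigma_{g,n}^b)$ and the conclusion is immediate, so your standing hyperbolicity assumption is harmless.)
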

\begin{proof}
The homeomorphism $T_{\alpha}$ can be supported in a small annular neighbourhood $A$ of $\alpha$. Fix an infinite geodesic ray $\gamma$ in $ \Sigma_{g,n}^b$ beginning at $x_0 \in C$ and not entering $A$, for instance by taking $\gamma$ to wind around one side of the annulus $A$.  Then the lift $\tilde{\gamma}$ in the universal cover ends at a point in $\partial \widetilde{\Sigma_{g,n}^b}$ which is a fixed point of $\rho_{s,C}(T_{\alpha})$. 
\end{proof}

Now suppose that $b>1$ and choose distinct  boundary components $C, C' \subset \partial \Sigma_{g,n}^b$.  By Proposition \ref{counterexample fixed points},  we know that $c(T_{C'}, C)$ is zero, while $c(T_{C'},C')=1$.  Therefore Theorem \ref{conj to shift intro version} cannot hold for a surface with multiple boundary components.

We handle the cases of low-genus surfaces similarly.  Considering $\Mod(\Sigma_{0,n}^1)$ with $n>2$, choose $\alpha$ to be a simple closed curve encircling precisely two of the marked points.  Then $c(T_{\alpha}) = 0$ by Proposition \ref{counterexample fixed points}.  On the other hand, if $\phi: \Mod(\Sigma_{0,n}^1) \rightarrow \mathbb{Z}$ is the abelianisation map, then $\phi(T_{\alpha})$ is nonzero (it is a square of a generator of $\mathbb{Z})$, and so via the abelianisation we can construct an action of $\Mod(\Sigma_{0,n}^1)$ on $\mathbb{R}$ such that $T_{\alpha}$ has no fixed points.  

Similarly, considering $\Mod(\Sigma_{1,n}^1)$ where $n \geq 0$, we let $T_\alpha \in \Mod(\Sigma_{1,n}^1)$ denote the class of a Dehn twist along a nonseparating simple closed curve $\alpha$ in $\Sigma_{1,n}^1$.  Then Proposition \ref{counterexample fixed points} shows that $c(T_{\alpha}) = 0$.  On the other hand, we can choose $\alpha$ so that the abelianisation homomorphism provides a map $\Mod(\Sigma_{1,n}^1) \rightarrow \ZZ$ such that $T_\alpha \mapsto 1$ \cite[Section 5]{Korkmaz02}.  As in the previous paragraph, this results in an action of $\Mod(\Sigma_{1,n}^1)$ on $\mathbb{R}$ where $T_{\alpha}$ acts without fixed points.

We conclude that Theorem \ref{conj to shift intro version} does not hold for both $\Mod(\Sigma_{0,n}^1)$ with $n>2$ and $\Mod(\Sigma_{1,n}^1)$ where $n \geq 0$.

\end{document}